\newcommand{\havf}{\ensuremath{H_{\mathrm{\scriptscriptstyle{AVF}}}}}
\newcommand{\chavf}{\ensuremath{\mathcal{H}_{\mathrm{\scriptscriptstyle{AVF}}}}}
\newcommand{\barp}{k}
\newcommand{\bw}{\ensuremath{\bf w}}
\newtheorem{theorem}{Theorem}[section]
\newtheorem{definition}[theorem]{Definition}
\numberwithin{equation}{section}
\numberwithin{table}{section}
\numberwithin{figure}{section}
\newcommand{\stc}[1]{}
\title{A general framework for deriving integral preserving numerical methods for PDEs}
\author{Morten Dahlby \and Brynjulf Owren}
\begin{document}
\maketitle
\begin{abstract}
	A general procedure for constructing conservative numerical integrators for time dependent partial differential equations is presented. In particular, 
	linearly implicit methods preserving a time discretised version of the invariant is developed for systems of partial differential equations with polynomial nonlinearities. The framework is rather general and allows for an arbitrary number of dependent and independent variables with derivatives of any order. It is proved formally that second order convergence is obtained. The procedure is applied to a test case and numerical experiments are provided.
\end{abstract}




\section{Introduction}
	Schemes that conserve geometric structure have been shown to be useful when studying the long time behaviour of dynamical systems.
	Such schemes are sometimes called geometric or structure preserving integrators \cite{MR2221614,MR2132573}.
	In this paper we shall mostly be concerned with the conservation of first integrals.

 Even if a presumption in this work is that the development of new and better integral preserving schemes is useful, we would still like to mention some situations where schemes with such properties are of importance. In the literature one finds several examples where stability of a numerical method is proved by directly using its conservative property, one example is the scheme developed for the cubic Schr\"{o}dinger equation in \cite{fei95nso}. Another application where the exact preservation of first integrals plays an important role is in the study of orbital stability of soliton solutions to certain Hamiltonian partial differential equations (PDEs) as discussed by Benjamin and coauthors \cite{benjamin72tso,benjamin72mef}.	
	
For ordinary differential equations (ODEs) it is common to devise relatively general frameworks for structure preservation.
 This is somewhat to the contrary of the usual practice with partial differential equations where each equation under consideration normally requires a dedicated scheme. 
But there exist certain fairly general methodologies that can be used for
developing geometric schemes also for PDEs. For example, through space discretisation of a Hamiltonian PDE one may obtain a system of Hamiltonian ODEs to which a geometric integrator may be applied. Another approach is to formulate the PDE in multi-symplectic form, and then apply a scheme which preserves a discrete version of this form, see \cite{MR2220764} for a review of this approach.

In this paper we consider methods for PDEs that are based on the discrete gradient method for ODEs. The discrete gradient method was perhaps first treated in a systematic way by Gonzalez \cite{MR1411343}, see also
\cite{MR2221614,MR1694701}. 
For PDEs one may derive discrete gradients either for the abstract Cauchy problem, where the solution at any time is considered as an element of some infinite dimensional  space, or one may semidiscretise the equations in space and then derive the corresponding discrete gradient for the resulting ODE system.
This last procedure has been elegantly presented in several articles by
Furihata, Matsuo and collaborators, see e.g.\ \cite{MR1727636,MR1852556,MR1815731,MR2313820,MR1848726,
MR1795452,MR1933890}, using the concept of discrete variational derivatives. See also the monograph \cite{fumabook}. The first part of this paper develops a similar framework that is rather general and allows for an arbitrary number of dependent and independent variables with derivatives of any order. The suggested approach does not require the equations to be discretised in space.

We consider a class of conservative schemes which are linearly implicit.
By linearly implicit we mean schemes which require the solution of precisely one linear system of equations in each time step. This is opposed to fully implicit schemes for which one typically applies an iterative solver that may require a linear system to be solved in every iteration. 
For standard fully implicit schemes one would typically balance the iteration error in solving the nonlinear system with the local truncation error. However, for conservative schemes the situation is different since exact conservation of the invariant requires that the nonlinear system is solved to machine precision.  This work can be seen as a generalisation  of ideas introduced in \cite[Section 6]{MR1848726}.

It may not, in general, be an easy task to quantify exactly what can be expected of gain in computational cost, if any, when replacing a fully implicit scheme with a linearly implicit one. For illustration we present an example where the KdV equation 
\begin{equation} \label{eq:KdV}
    u_t + u_{xxx}+(u^2)_x = 0
\end{equation}
is solved on a periodic domain using a fully implicit scheme
\begin{equation}\label{eq:kdvimp}
 \frac{U^{n+1}-U^{n}}{\Delta t}+\frac{U_{xxx}^{n+1}+U_{xxx}^{n}}2+\left(\frac{(U^{n+1})^2+U^{n+1}U^{n}+(U^{n})^2}3\right)_x=0,
\end{equation}
	and a linearly implicit scheme
\begin{equation}\label{eq:kdvlinin}
 \frac{U^{n+2}-U^{n}}{2\Delta t}+\frac{U_{xxx}^{n+2}+U_{xxx}^{n}}2+\left(U^{n+1}\frac{U^{n+2}+U^{n+1}+U^{n}}3\right)_x=0,
\end{equation}	
where $U^n(x)\approx u(x,t^n)=u(x,t^0+n\Delta t)$.
	
	These schemes are derived in Section \ref{se:num}.
	For space discretisation centered differences are used for both schemes.
	Note that the linearly implicit scheme \eqref{eq:kdvlinin} has a multistep nature, but should not be confused with standard linear multistep methods. Furihata et al.\  sometimes use the term "multiple points linearly implicit schemes" to emphasise this fact.

	The schemes are both second order in time, but in our example the linearly implicit multistep scheme has an error constant which is about 3-4 times larger than the fully implicit one-step scheme. 
	In Figure~\ref{fi:secondpic} we plot the global error versus the number of linear solves for the two 
	schemes. The linearly implicit scheme solves one linear system in each time step. 
	The fully implicit scheme, on the other hand, solves a linear system for each Newton iteration which is repeated to machine precision in each time step. 
	For the largest time step in our experiment this amounts to 561 linear solves per time step. The linear systems in each of the two cases 
	have the same matrix structure, they are both penta-diagonal, and we therefore assume that the cost of solving the linear system is approximately the same for both methods.
	The $x$-axis in Figure~\ref{fi:secondpic} can thus be interpreted as  a measure of the computational cost in each scheme. 
	The plot shows that for a given global error the linearly implicit scheme is computationally cheaper than the fully implicit scheme. 
	\begin{figure}
		\centering
		\includegraphics[width=.8\textwidth]{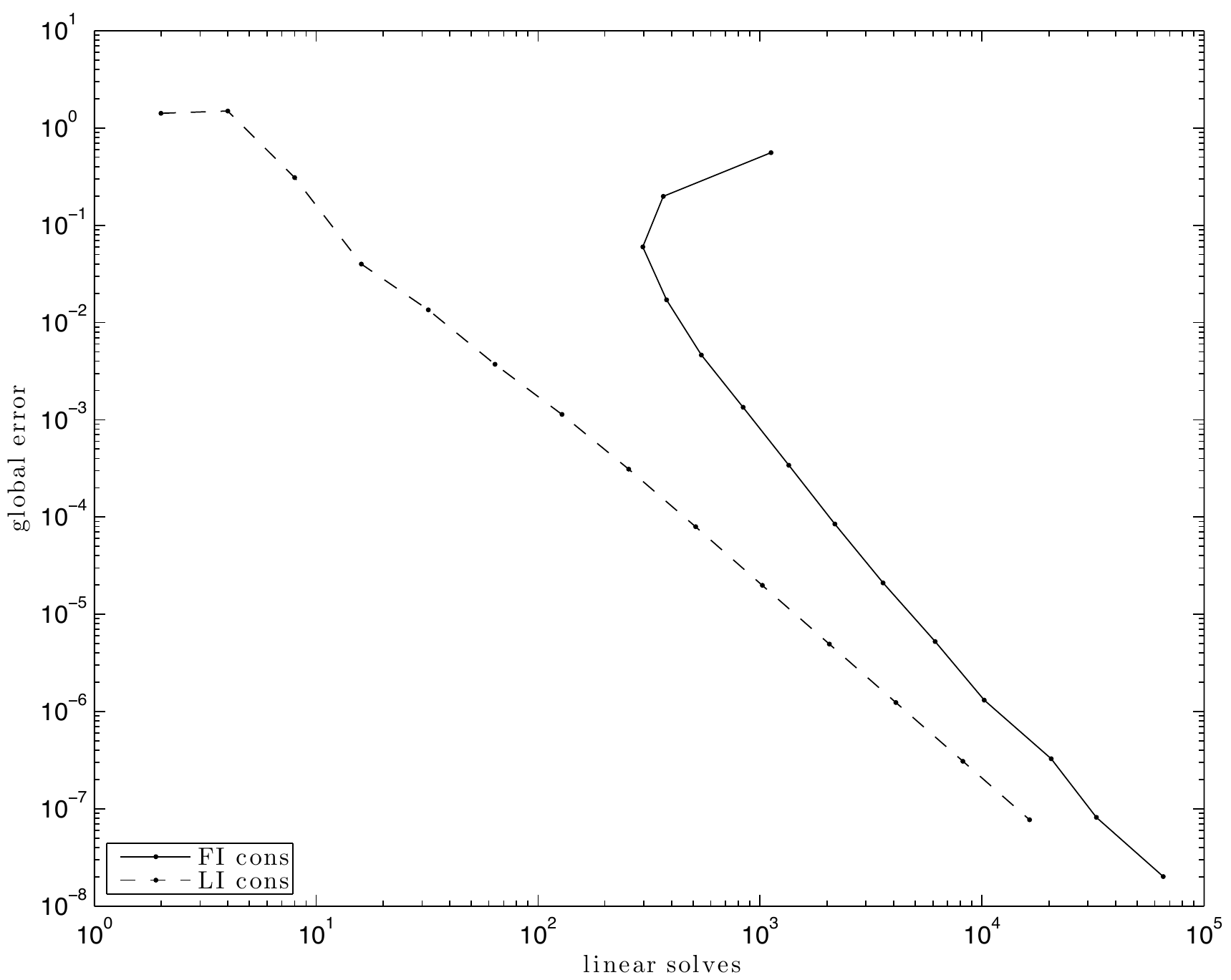}
		\caption{The global error versus the number of linear solves for the two schemes~\eqref{eq:kdvimp} (FI cons) and~\eqref{eq:kdvlinin} (LI cons).}
		\label{fi:secondpic}
	\end{figure}	
	
	There are situations in which the results from this example may be less relevant. For instance, the iteration method used in a fully implicit schemes may use approximate versions of the Jacobian for which faster solvers can be applied, therefore the cost of a linear solve may not be the same for the two types of schemes. For large time steps, both types of schemes are likely to encounter difficulties, but for slightly different reasons. The fully implicit scheme may experience slow or no convergence at all of the iteration scheme, whereas the linearly implicit scheme may become unstable for time steps over a certain threshold \cite{dahlbyowren}. For instance, in the case of the stiff ODE considered by Gonzales and Simo \cite{gonzalez96ots} one will observe that the stability properties of the linearly implicit schemes will be completely lost whereas fully implicit conservative schemes behave remarkably well.
	For large-scale problems, one may have situations in which iterative linear solvers are required and where one cannot afford to solve these systems to machine accuracy, in such cases the linearly implicit schemes are less useful.
	In conlusion, we believe that which of the two types of schemes that is preferable depends on the PDE and the circumstances under which it is to be solved. 
 
	The two schemes used in the example above have slightly different conservation properties. 	
			The first one \eqref{eq:kdvimp} conserves the exact Hamiltonian
	\begin{equation*}
		\mathcal{H}[U^n]=\int_\Omega\left(\frac12(U^n_x)^2-\frac13(U^n)^3\right)\,\mathrm{d}x,
	\end{equation*}
	whereas the second scheme \eqref{eq:kdvlinin} conserves what we will define as the polarised Hamiltonian
	\begin{equation*}
		H[U^n,U^{n+1}]=\int_\Omega\left(\frac14\left((U^n_x)^2+(U^{n+1}_x)^2\right)-\frac16\left((U^n)^2U^{n+1}+(U^{n+1})^2U^n\right)\right)\,\mathrm{d}x.
	\end{equation*}
	Both of these functions are approximations to the true Hamiltonian, the first is a spatial approximation for a fixed time, and the second also includes an averaging over time. 
	The intention is that in both cases one can see the methods as exactly preserving a slightly perturbed first integral over very long times.
	 That this seems to work for the chosen example is clearly seen in the first plot in Figure \ref{fi:Hplot} where we plot the error in $\mathcal{H}$ as a function of the solution obtained by the linearly implicit scheme \eqref{eq:kdvlinin}. We integrate to $t=1000$ and the error is plotted from $t=980$ to $t=1000$. 
	Notice that in this example there is no drift in the energy error. 			
	The corresponding error plots for $\mathcal{H}$ as a function of the solution of \eqref{eq:kdvimp} and $H$ as a function of the solution of \eqref{eq:kdvlinin} are omitted since they are both preserved up to round-off error, and thus not that interesting. 
	The second plot in Figure \ref{fi:Hplot} shows how the error in $\mathcal{H}$ at the endpoint depends on $\Delta t$. 
	Empirically, we have the relation
	\begin{equation*}
		\mathcal{H}[U^n]=\mathcal{H}[U^0]+C(\Delta t)^2,
	\end{equation*}
	where $C$ is a constant that depends on the solution, but not on $n$. 
	See Section \ref{se:num} for another example that tests the long time structure preserving properties of these schemes.


	\begin{figure}
		\centering
		\includegraphics[width=.8\textwidth]{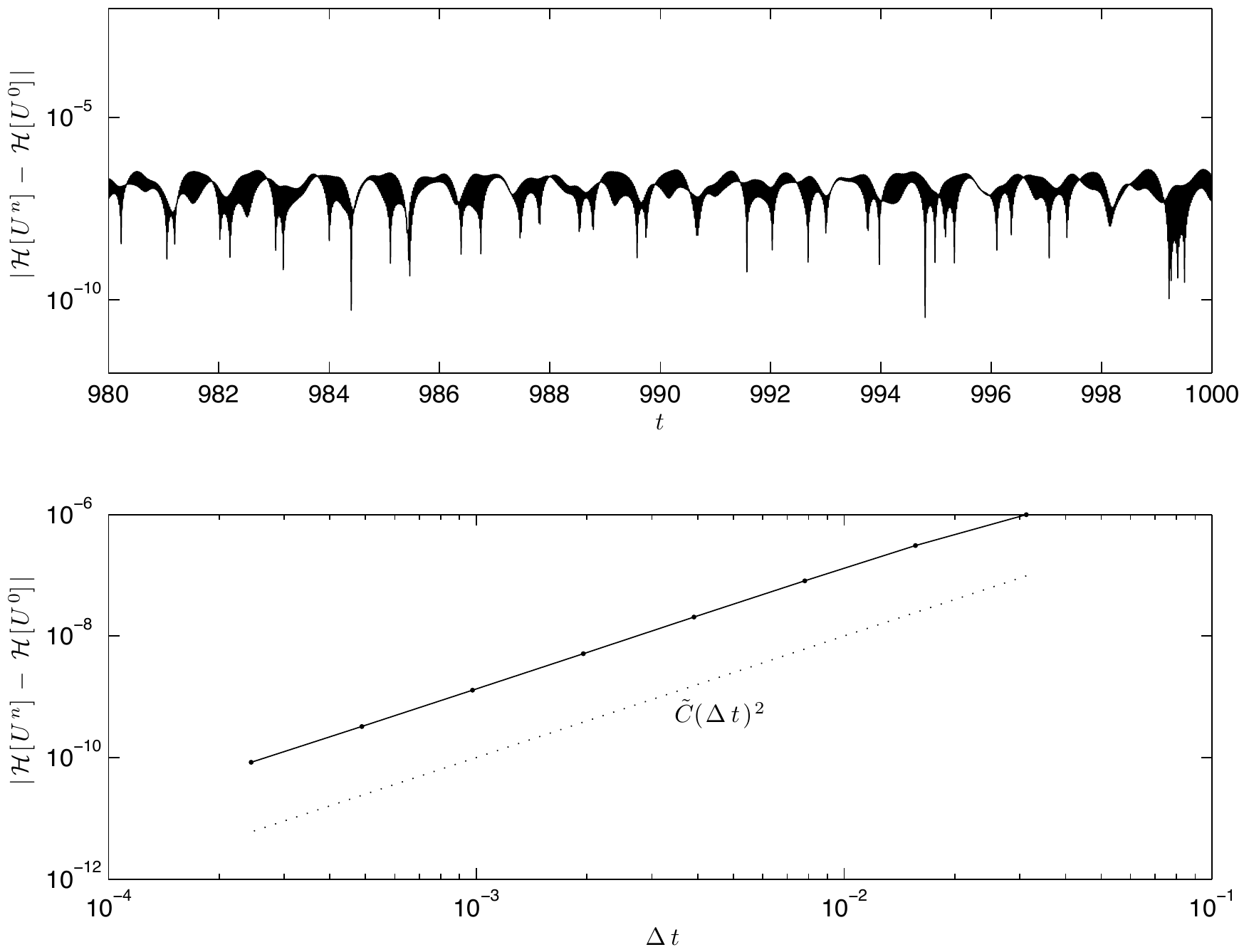}
		\caption{The error in $\mathcal{H}$ as a function of the solution obtained by the linearly implicit scheme \ref{eq:kdvlinin} as a function of $t$ (top) and $\Delta t$ (bottom). The dotted line is a reference line $\tilde{C}(\Delta t)^2$.}
		\label{fi:Hplot}
	\end{figure}
This and similar examples show that there are situations where linearly implicit schemes can be a better choice than their fully implicit counterparts. Figure \ref{fi:secondpic} shows that the linearly implicit scheme is cheaper, while Figure \ref{fi:Hplot} shows that both solutions have similar long-term behaviour. Similar favourable behaviour of linearly implicit schemes can be found in the literature.
		For the cubic Schr\"{o}dinger equation there are such conservative schemes based on time averaged versions of the Hamiltonian by Fei et al.\ \cite{fei95nso} and by Besse \cite{besse04ars}. Examples of methods for other PDEs can be found in the monograph \cite{fumabook} and the papers \cite{MR1360462} and \cite{MR1239931}. 
		
In the next section we define the PDE framework that we use. Then, in Section~\ref{sec:dg}
	we consider discrete gradient methods and how they can be applied to PDEs. We study in particular
	 the average vector field method by Quispel and McLaren \cite{MR2451073} and the discrete variational derivative method by
	 Furihata, Matsuo and coauthors
	\cite{MR1727636,MR1852556,MR2313820,MR1848726,MR1795452,MR1933890}.  We develop a framework that works for a rather general class of equations. 
	
	The key tools for developing linearly implicit methods for polynomial Hamiltonians are treated in Section~\ref{sec:linimp}, introducing the concept of polarisation. There is some freedom in this procedure, and we show through a rather general example term how the choice may significantly affect the stability of the scheme.
	
We defer the introduction of spatial discretisation until Section~\ref{se:spatial}. This is done mostly in order to keep a simpler notation, but also because our approach concerns conservative time discretisations and is essentially independent of the choice of spatial discretisation.  
The last section offers some more details on the procedure for constructing schemes and we give some indication through numerical tests on the long term behaviour of the schemes.
%
%
 \section{Notation and preliminaries}
 We consider integral preserving PDEs written in the form	
\begin{equation}
		u_t=\mathcal{D}\frac{\delta\mathcal{H}}{\delta u},
		\label{eq:hamileq}
	\end{equation}	
	where 
	\begin{equation}
		\mathcal{H}[u]=\int_{\Omega} \mathcal{G}[u]\,\mathrm{d} x=
		\int_{\Omega} \mathcal{G}((u_J^{\alpha}))\,\mathrm{d} x,\quad \Omega\subseteq\mathbb{R}^d, 
		 		\label{eq:h}
	\end{equation}	
	is the preserved quantity  and $\mathcal{D}$ is a skew-symmetric operator that may depend on $u$.  We write $\mathrm{d}x=\mathrm{d}x^1\cdots\mathrm{d}x^d$. We remark in passing that the class of PDEs which can be written in the form \eqref{eq:hamileq} contains the class of Hamiltonian PDEs, however we do not make the additional assumption that $\mathcal{D}$ satisfies the Jacobi identity \cite{olver93aol}. 
	By $(u_J^\alpha)$ we mean $u$ itself, which may be a vector 
$u=(u^{\alpha})\in\mathbb{R}^m$, and all its partial derivatives with respect to all independent variables, $(x^1,\ldots,x^d)$, up to and including some degree $\nu$. Thus, $J$ is a multi-index, we let $J=(j_1,\ldots,j_{r})$, where $r=|J|$ the number of components in $J$, and 
\[
   u_J^{\alpha} = \frac{\partial^{r}u^{\alpha}}{\partial x^{j_1}\cdots\partial x^{j_{r}}},\quad 0\leq r\leq\nu.
\]
As in \cite{olver93aol}, the square brackets in \eqref{eq:h} are used to indicate that a function depends also on the derivatives of its arguments with respect to the independent variables.
  In one dimension $d=1$ and $m=1$, for example, we have
	\begin{equation*}
		\mathcal{G}[u]=\mathcal{G}\left((u_J)\right)=
		\mathcal{G}\left(u,\frac{\partial u}{\partial x},\dots,
		\frac{\partial^\nu u}{\partial x^\nu}\right).
	\end{equation*}
The variational derivative $\frac{\delta\mathcal{H}}{\delta u}$ is an $m$-vector depending on
$u_J^{\alpha}$ for   $|J|\leq \nu'$ where $\nu'\geq \nu$. It may be defined through the relation
	\cite[p. 245]{olver93aol}
	\begin{equation}\label{eq:varder}
		\int_{\Omega} \frac{\delta\mathcal{H}}{\delta u}\cdot \varphi\,\mathrm{d}x
		=\left.\frac{\partial}{\partial \epsilon}\right|_{\epsilon=0}\mathcal{H}[u+\epsilon \varphi],
	\end{equation}
	for any sufficiently smooth $m$-vector of functions $\varphi(x)$. One may calculate
 $\frac{\delta\mathcal{H}}{\delta u}$  by applying the Euler operator to $\mathcal{G}[u]$,
 the $\alpha$-component is given as
 \begin{equation} \label{eq:varderdef}
\left(\frac{\delta\mathcal{H}}{\delta u} \right)^{\alpha} =   \mathbf{E}_{\alpha}\mathcal{G}[u],
 \end{equation}
 where
 \begin{equation} \label{eq:euleropdef}
     \mathbf{E}_{\alpha} = \sum_{|J|\leq\nu} (-1)^{|J|} D_J \frac{\partial}{\partial u_J^{\alpha}}
 \end{equation}
 so that the sum ranges over all $J$ corresponding to derivatives $u_J^{\alpha}$ featuring in
 $\mathcal{G}$.
 We have used total derivative operators,
 \[
     D_J=D_{j_1}\dots D_{j_k},\quad D_{i}=\sum_{\alpha,J} \frac{\partial u_J^{\alpha}}{\partial x^i}
     \frac{\partial}{\partial u_J^{\alpha}}.
 \]
 	
	 In parts of the paper we refer to Hamiltonians as polynomial, or specifically quadratic. By this we mean that $\mathcal{H}$ is of a form such that $\mathcal{G}$ is a multivariate polynomial in the indeterminates $u_J^{\alpha}$, which in the quadratic case is of degree at most two. For example, the KdV equation \eqref{eq:KdV} has a polynomial Hamiltonian of degree 3
	\begin{equation*}
		\mathcal{H}[u]=\int_\Omega\left(\frac12u_x^2-\frac{1}{3}u^{3}\right)\,\mathrm{d}x.
	\end{equation*} 
	 In this case $\mathcal{G}=\mathcal{G}(u,u_{x})$ and thus $m=d=\nu=1$, and we get	
	 \begin{align}
		\frac{\delta\mathcal{H}}{\delta u}=\mathbf{E}\mathcal{G}((u_J))&=
		\frac{\partial\mathcal{G} }{\partial u}-\frac{\partial}{\partial x}\frac{\partial\mathcal{G} }{\partial u_{x}}\label{eq:euler111}\\
		&=-u^2-u_{xx}.
	\end{align}

	We always assume sufficient regularity in the solution and that the boundary conditions on $\Omega$ are such that the boundary terms vanish when doing  integration by parts, for example periodic boundary conditions. 
 	The operator $\mathcal{D}$ should be skew-symmetric with respect to the $L^2$ inner product
	\begin{equation}
		\int_{\Omega} (\mathcal{D}v)w\,\mathrm{d}x=-\int_{\Omega} v(\mathcal{D}w)\,\mathrm{d}x\quad
		\forall\; u,w.
		\label{eq:skew}
	\end{equation}
	For the KdV case we simply have $\mathcal{D}=\frac{\partial}{\partial x}$.
	
Furthermore, to be a true Hamiltonian system it should induce a Poisson bracket on the space of functionals as described e.g.\ in \cite[Ch 7.1]{olver93aol}, meaning that the Jacobi identity must be satisfied. However, the approach presented here only requires $\mathcal{D}$ to be skew-symmetric so that the functional $\mathcal{H}$ is a conserved quantity. In the case that the PDE has more than one Hamiltonian formulation, we may make a choice of which of the integrals to preserve. Our approach does not in general allow for the preservation of more than one Hamiltonian at the same time, for this see the upcoming paper \cite{daowya2010}.

PDEs such as the wave equation are typically written with $u_{tt}$ appearing on the left hand side, in such cases we double the dimension of $u$ in order to apply the stated framework.
For complex equations one may do something similar, splitting either into a real and an imaginary part, or adding in the complex conjugate as a separate variable.
 	
\section{Discrete gradient and variational derivative methods}\label{sec:dg}
Discrete gradient methods for ODEs were introduced by Gonzalez  \cite{MR1411343}. 
See also \cite{cell092}, \cite{cell093},  \cite{MR1694701}, and \cite[Chapter V.5]{MR2221614}. 
Recently  this idea has been applied to PDEs in the form of 
the  average vector field (AVF) method \cite{cell09} and in a somewhat more general setting, the discrete variational derivative (DVD) method. 

We recall the definition of a discrete gradient as presented for ODEs. If $H:\mathbb{R}^M \rightarrow\mathbb{R}$, a discrete gradient is a continuous map $\overline{\nabla}:
\mathbb{R}^M\times \mathbb{R}^M\rightarrow\mathbb{R}^M$ such that for every $\bf u$ and
$\bf v$ in $\mathbb{R}^M$
\begin{align*}
H({\bf u})-H({\bf v})&= \overline{\nabla}H({\bf v},{\bf u})\cdot ({\bf u}-{\bf v}),\\
\overline{\nabla}H({\bf u},{\bf u}) &= \nabla H({\bf u}).
\end{align*}
Since an ODE system preserving $H$ can be written in the form
\[
   \frac{\mathrm{d}\bf  y}{\mathrm{d}t} = S({\bf y})\,\nabla H({\bf y})
\]
for some skew-symmetric matrix $S({\bf y})$, one obtains a conservative method simply by defining approximations ${\bf y}^n \approx {\bf y}(t^n)={\bf y}(t^0+n\Delta t)$ through the formula
\[
     \frac{{\bf y}^{n+1}-{\bf y}^n}{\Delta t} = \tilde{S}\,\overline{\nabla}H({\bf y}^n,{\bf y}^{n+1}),
\]
where $\tilde{S}$, typically allowed to depend on ${\bf y}^n$ and ${\bf y}^{n+1}$, is some skew-symmetric matrix approximating the original $S$.

There are many possible choices of discrete gradients for a function $H$, see for instance
\cite{MR2221614,MR1694701}.
A particular example  is the one used in the AVF method defined as
\[
\overline{\nabla}_{\mathrm{AVF}}H({\bf v},{\bf u}) = \int_0^1 \nabla H(\xi {\bf u}+(1-\xi){\bf v})\,\mathrm{d}\xi.
\]
 When applying this approach to PDEs the obvious strategy is to discretise the Hamiltonian
$\mathcal{H}[u]$ in space, replacing each derivative by a suitable approximation like e.g.\ finite differences, 
 to obtain a Hamiltonian $\mathcal{H}_d({\bf u})$ as for ODEs. Similarly, the skew-symmetric operator $\mathcal{D}$ is replaced by a skew-symmetric $M\times M$-matrix $\mathcal{D}_d$ to yield the scheme
 \begin{equation} \label{eq:metdg}
      \frac{{\bf u}^{n+1}-{\bf u}^n}{\Delta t} = \mathcal{D}_d\,\overline{\nabla}\mathcal{H}_d({\bf u}^n,{\bf u}^{n+1})
 \end{equation}
 for advancing the numerical solution ${\bf u}^n$ at time $t^n$ to ${\bf u}^{n+1}$ at time $t^{n+1}$.
Examples are worked out for several PDEs in \cite{cell09}.

Furihata, Matsuo and coauthors present a whole framework for discretising PDEs in the variational setting in a series of papers, providing a discrete analogue of the continuous calculus, see for instance \cite{MR1727636}.
They discretise $\mathcal{G}$ to obtain $\mathcal{G}_d$ using difference operators, 
and then the integral in $\mathcal{H}$ is approximated by a sum to yield $\mathcal{H}_d$.
Then they derive a discrete counterpart to the variational derivative, and finally state the difference scheme in a form which is a perfect 
analogue to the Hamiltonian PDE system \eqref{eq:hamileq}, letting
\[
\frac{\mathbf{u}^{n+1}-\mathbf{u}^n}{\Delta t} = \mathcal{D}_d\frac{\delta\mathcal{H}_d}{\delta({\bf u}^n,{\bf u}^{n+1})}.
\]
 The use of integration by parts in deriving the Euler operator is mimicked by similar summation by part formulas for the discrete case.
Their discrete variational derivative is in fact rather similar to a discrete gradient, as it satisfies
the relation
\begin{equation} \label{eq:dvdprop}
 \mathcal{H}_d({\bf u})-\mathcal{H}_d({\bf v}) = 
 \langle\frac{\delta\mathcal{H}_d}{\delta({\bf v},{\bf u})},{\bf u}-{\bf v}\rangle
\end{equation}
for the discrete $L^2$ inner product. 

In the present paper, we focus on the time dimension in most of what follows, thus we shall defer the steps in which $\mathcal{H}$ and thereby
$\mathcal{G}$ are discretised in space. But \eqref{eq:dvdprop} makes perfect sense after removing the subscript $d$, replacing  $\mathbf{u}$ and $\mathbf{v}$ by functions $u$ and $v$, and the discrete $L^2$ inner product by the continuous one.
A discrete variational derivative (DVD) is here defined to be any continuous function  $\frac{\delta  \mathcal{H}}{\delta (v,u)}$ of $(u^{(\nu)},v^{(\nu)})$ satisfying
	\begin{align}
		\mathcal{H}[u]-\mathcal{H}[v]&=\int_\Omega \frac{\delta  \mathcal{H}}{\delta (v,u)}(u-v)\,\mathrm{d}x,\label{eq:dg1}\\
		\frac{\delta  \mathcal{H}}{\delta (u,u)}&=\frac{\delta\mathcal{H}}{\delta u}.\label{eq:dg2}
	\end{align}
The integrator yields a continuous function $U^n:=U^n(x)\approx u(x,t^n)$ for each $t^n$
	\begin{equation}\label{eq:dgm}
		\frac{U^{n+1}-U^{n}}{\Delta t}=\mathcal{D}\frac{\delta \mathcal{H}}{\delta (U^n,U^{n+1})}.
	\end{equation}
	 By combining \eqref{eq:dg1} and \eqref{eq:dgm} we see that the method preserves $\mathcal{H}$.
  	
The AVF scheme can of course also be interpreted as a discrete variational derivative method where 
		\begin{equation}\label{eq:dgavf}
		\frac{\delta \chavf }{\delta (v,u)}=\int_{0}^{1}\frac{\delta\mathcal{H}}{\delta u}[\xi u+(1-\xi)v]\,\mathrm{d}\xi.
	\end{equation}
The fact that \eqref{eq:dgavf} verifies the condition \eqref{eq:dg1} is seen from the elementary identity
\begin{equation} \label{eq:diffH}
\mathcal{H}[u]-\mathcal{H}[v] = \int_0^1 \frac{\rm d}{\mathrm{d}\xi}
\mathcal{H}[\xi u + (1-\xi) v]\,\mathrm{d}\xi.
\end{equation}
The derivative under the integral is written
\begin{align*}
 \frac{\rm d}{\mathrm{d}\xi}\mathcal{H}[\xi u + (1-\xi) v]&=
  \left.\frac{\rm d}{\mathrm{d}\varepsilon}\right|_{\varepsilon=0}\mathcal{H}[v+(\xi+\varepsilon) (u - v)]
    \\[2mm]
    & = \int_{\Omega} \frac{\delta \mathcal{H}}{\delta u}[\xi u + (1-\xi) v] (u-v)\,\mathrm{d}\mathbf{x}.
\end{align*}
Now substitute this into \eqref{eq:diffH} and interchange the integrals to obtain \eqref{eq:dg1}.

In most of the cited papers by Furihata, Matsuo and coauthors, the notion of a DVD method is less general than what we just presented, in the sense that  the relation \eqref{eq:dvdprop} is not actually used as the defining equation for a discrete variational derivative. Instead the authors present a relatively general format that can be used for discretising $\mathcal{H}$, this format is depending on the class of PDEs under consideration, and they work out the explicit expression for a particular discrete variational derivative. To give an idea of how the format may look like, we briefly review some points from \cite{MR1727636} where PDEs of the form \eqref{eq:h} are considered with $d=m=\nu=1$ such that $\mathcal{G}=\mathcal{G}(u,u_x)$. $\mathcal{G}$ is assumed to be written as a finite sum
\begin{equation} \label{eq:japform}
     \mathcal{G}(u,u_x) = \sum_{\ell} \alpha_{\ell} f_{\ell}(u)g_{\ell}(u_x).
\end{equation}
where $f_{\ell}$ and $g_{\ell}$ are differentiable functions.
\footnote{In \cite{MR1727636} the expression is discretised in space and $g_{\ell}(u_x)$ 
is replaced by a product $g_{\ell}^+(\delta_k^+U_k)g_{\ell}^-(\delta_k^-U_k)$ where
$\delta_k^+$ and $\delta_k^-$ are forward and backward divided differences respectively.}
The form \eqref{eq:dg1} is then derived through
\begin{align*}
 f_{\ell}(u)g_{\ell}(u_x)- f_{\ell}(v)g_{\ell}(v_x)
  &= \frac{f_\ell(u)-f_\ell(v)}{u-v}\frac{g_\ell(u_x)+g_\ell(v_x)}{2}(u-v)\\&
  + \frac{g_\ell(u_x)-g_\ell(v_x)}{u_x-v_x}\frac{f_\ell(u)+f_\ell(v)}{2}(u_x-v_x)
\end{align*}
followed by an integration by part on the second term. This technique can be extended in any number of ways to allow for more general classes of PDEs. For instance, one may allow for more factors in \eqref{eq:japform}, like 
\[
   \mathcal{G}[u]=\sum_{\ell}\alpha_{\ell}\prod_{J} g_{\ell,J}(\partial_J u)
\]
and repeated application of the formula $ab-cd=\frac{a+c}{2}(b-d)+\frac{b+d}{2}(a-c)$ to this equation combined with integration by parts will result in a discrete variational derivative.

Schemes which are built on this particular type of discrete variational derivative will be called the Furihata methods in the sequel since it was first introduced 
in~\cite{MR1727636}. Matsuo et al.\ extend the method 
to complex equations in \cite{MR1848726},	while~\cite{MR1852556,MR2313820}  derive methods for equations with second order time derivatives. 
	Other papers using the discrete variational derivative approach include \cite{MR1815731}, \cite{MR1933890}, and \cite{Yaguchi2009}.
	
The lack of a general formalism in the papers just mentioned, makes it somewhat difficult to compare the approach to the AVF method and characterise in which cases they lead to the same scheme. Taking for instance the KdV equation \eqref{eq:KdV} one easily finds that both approaches lead to the scheme
\eqref{eq:kdvimp}, however, considering for instance the Hamiltonian
\[
    \mathcal{H}[u] = \int_\Omega u u_{x}^2\,\mathrm{d}x
\]
one would obtain two different types of discrete variational derivative in the Furihata method and the AVF method, that is
$\frac{\delta \mathcal{H}_{F}}{\delta (v,u)}\neq\frac{\delta \chavf}{\delta (v,u)}$.
 In some important cases, the Furihata method and the AVF method lead to the same scheme.
	\begin{theorem}\label{thm:avfeqfur}
	Suppose that the Hamiltonian $\mathcal{H}[u]$ is a linear combination of terms of either of the types
		\begin{enumerate}
			\item 
			$\int_\Omega\partial_{J}u\,\cdot\,\partial_{K}u\,\mathrm{d}x$ for multi-indices $J$ and $K$, or
			\item  
			$\int_\Omega g(\partial_J u)\,\mathrm{d}x$ for differentiable $g: \mathbb{R}\rightarrow \mathbb{R}$.
		\end{enumerate}
		Then the AVF and the Furihata methods yield the same scheme
	\end{theorem}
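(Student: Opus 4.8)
The plan is to establish the stronger fact that, for a Hamiltonian of the stated form, the two methods produce \emph{the same} discrete variational derivative; since the integrator \eqref{eq:dgm} is completely determined by $\mathcal{D}$ together with $\frac{\delta\mathcal H}{\delta(v,u)}$, equality of the discrete variational derivatives immediately gives equality of the schemes. I would first observe that both constructions are additive under a decomposition of $\mathcal{H}$ into a linear combination of admissible terms: for the AVF discrete variational derivative \eqref{eq:dgavf} this is because the Euler operator \eqref{eq:euleropdef} and the integral over $\xi$ are linear, while for the Furihata method it is built into the recipe, the summands of $\mathcal{G}$ being processed one at a time. It therefore suffices to treat a single term of type~1 and a single term of type~2.

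For a type~1 term, $\mathcal{G}=\partial_J u\,\partial_K u$, the Hamiltonian is homogeneous quadratic, so $\frac{\delta\mathcal H}{\delta u}$ is linear in $u$; applying \eqref{eq:euleropdef} I would get $\frac{\delta\mathcal H}{\delta u}=\big((-1)^{|J|}+(-1)^{|K|}\big)\,\partial_J\partial_K u$, a formula that also covers the coincident case $J=K$. Linearity collapses the $\xi$-integral in \eqref{eq:dgavf} to the average of the endpoints, so the AVF discrete variational derivative is $\tfrac12\big(\frac{\delta\mathcal H}{\delta u}[u]+\frac{\delta\mathcal H}{\delta u}[v]\big)$. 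On the Furihata side I would use the identity $ab-cd=\tfrac{a+c}{2}(b-d)+\tfrac{b+d}{2}(a-c)$ with $a=\partial_J u$, $b=\partial_K u$, $c=\partial_J v$, $d=\partial_K v$, then replace $\partial_K u-\partial_K v$ by $\partial_K(u-v)$ (and similarly for $J$) and integrate by parts $|K|$, resp.\ $|J|$, times, with no boundary contributions by the standing assumptions. This yields $\tfrac{(-1)^{|J|}+(-1)^{|K|}}{2}\big(\partial_J\partial_K u+\partial_J\partial_K v\big)$, exactly the AVF expression.

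For a type~2 term, $\mathcal{G}=g(\partial_J u)$, no product rule is needed: from $g(\partial_J u)-g(\partial_J v)=\frac{g(\partial_J u)-g(\partial_J v)}{\partial_J u-\partial_J v}\,\partial_J(u-v)$ and integration by parts $|J|$ times I would read off the Furihata discrete variational derivative $(-1)^{|J|}\partial_J\!\Big[\frac{g(\partial_J u)-g(\partial_J v)}{\partial_J u-\partial_J v}\Big]$. For the AVF method, \eqref{eq:euleropdef} gives $\frac{\delta\mathcal H}{\delta u}[w]=(-1)^{|J|}\partial_J\big[g'(\partial_J w)\big]$; substituting $w=\xi u+(1-\xi)v$, pulling the spatial operator $\partial_J$ out of the $\xi$-integral, and using $\int_0^1 g'\big(\xi a+(1-\xi)b\big)\,\mathrm{d}\xi=\frac{g(a)-g(b)}{a-b}$ with $a=\partial_J u$ and $b=\partial_J v$ produces the identical expression. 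Combined with the additivity noted above, this proves the theorem.

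The delicate point is not any single computation but pinning down the Furihata construction precisely enough to be compared with the AVF method at all: one has to fix the canonical way of writing each admissible term (a type~1 term as the product of the two factors $\partial_J u$ and $\partial_K u$; a type~2 term as the single factor $g(\partial_J u)$) so that the recipe is unambiguous, and then keep the multi-index bookkeeping in the repeated integrations by parts consistent — in particular treating $J=K$ correctly and checking that the two halves produced by the $ab-cd$ identity recombine into $\partial_J\partial_K$ via commutativity of the total derivatives. Everything else is routine manipulation.
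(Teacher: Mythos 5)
Your proof is correct and follows essentially the same route as the paper: reduce by linearity to a single term of each type, compute the variational derivative and the AVF discrete variational derivative (with the $\xi$-integral collapsing to an endpoint average in the quadratic case and to the difference quotient $\frac{g(\partial_J u)-g(\partial_J v)}{\partial_J u-\partial_J v}$ in the other), and match it against the Furihata expression obtained from the product/difference-quotient identity followed by integration by parts. No gaps to report.
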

	\begin{proof} It suffices to check one general term of each type.
		\begin{enumerate}
			\item We find the variational derivative using~\eqref{eq:varder}
				\begin{equation*}
					\frac{\delta\mathcal{H}}{\delta u}=\left((-1)^{|J|}+(-1)^{|K|}\right)\partial_{J+K}u.
				\end{equation*}
				Inserting the variational derivative into~\eqref{eq:dgavf} gives 				
				\begin{equation*}
					\frac{\delta  \chavf}{\delta (v,u)}=\left((-1)^{|J|}+(-1)^{|K|}\right)\partial_{J+K}\left(\frac{u+v}{2}\right).			
				\end{equation*}
				To find the discrete variational derivative of the Furihata method we compute
				 				\begin{align*}
					\mathcal{H}[u]-\mathcal{H}[v]&=\int_\Omega\partial_{J}u\cdot\partial_{K}u-\partial_{J}v\cdot\partial_{K}v\,\mathrm{d}x\\
					&=\frac12\int_\Omega\left(\partial_{J}u-\partial_{J}v\right)\cdot\left(\partial_{K}u+\partial_{K}v\right)
					+\left(\partial_{J}u+\partial_{J}v\right)\cdot\left(\partial_{K}u-\partial_{K}v\right)\,\mathrm{d}x.
				\end{align*}
				After integration by parts  we get 			
				\begin{equation*}
					\frac{\delta  \mathcal{H}_{F}}{\delta (v,u)}=\left((-1)^{|J|}+(-1)^{|K|}\right)\partial_{J+K}\left(\frac{u+v}{2}\right),		
				\end{equation*}	
				and we see that $\displaystyle{\frac{\delta  \chavf}{\delta (v,u)}=\frac{\delta  \mathcal{H}_{F}}{\delta (v,u)}}$.
			\item In this case we get 
			\begin{equation*}
			\frac{\delta\mathcal{H}}{\delta u}=(-1)^{|J|}\partial_{J} g'(\partial_{J}u),
			\end{equation*}
			so that
			\begin{align*}
			\frac{\delta \chavf}{\delta (v,u)}&=
			(-1)^{|J|}\int_0^1 \partial_{J}g'(\partial_{J}(\xi u+(1-\xi)v))\,\mathrm{d}\xi
			\\&=(-1)^{|J|}\partial_J\left(\frac{g(\partial_J u)-g(\partial_J v)}
			            {\partial_Ju - \partial_Jv}
			\right).
			\end{align*}
			For the Furihata method one would here just compute
			\begin{align*}
			H[u]-H[v]=\int_\Omega \frac{g(\partial_J u)-g(\partial_J v)}
			{\partial_J u-\partial_J v}\left(\partial_J u-\partial_Jv
			\right)\,\mathrm{d}x
			\end{align*}
			and integration by parts yields $\displaystyle{\frac{\delta \mathcal{H}_{F}}{\delta (v,u)}=\frac{\delta \chavf}{\delta (v,u)}}$.	
 		\end{enumerate}
	\end{proof}	
	
\section{Linearly Implicit Difference Schemes} \label{sec:linimp}
	
\subsection{Polarisation}\label{se:pol}

		The key to constructing conservative linearly implicit schemes will be to portion out the nonlinearity over consecutive time steps. In effect, this means that we replace the original Hamiltonian $\mathcal{H}$ with an approximate one $H$. We shall call $H$  a polarisation 
of $\mathcal{H}$ since its definition resembles the way an inner product is derived from a quadratic form. We shall see that the difference scheme resulting from such a polarised Hamiltonian will be a multistep method. This method will now preserve exactly $H$, as opposed to $\mathcal{H}$ for the methods in the previous section. 
%
		The requirements on $H$ are given in the following definition.
		\begin{definition}[The polarised Hamiltonian]\label{de:polar}
			Given a Hamiltonian $\mathcal{H}[u]$ the polarised Hamiltonian $H$ depends on $\barp$ arguments, and
			is:
			\begin{itemize}
				\item Consistent 
					\begin{equation} \label{eq:consist}
						H[u,u,\dots,u]=\mathcal{H}[u].
					\end{equation}
				\item Invariant under any cyclic permutation of the arguments
					\begin{equation} \label{eq:cyclic}
					H[w_1,w_2,\dots,w_{\barp}]=H[w_2,\dots,w_{\barp},w_1].
					\end{equation}	
			\end{itemize}
		\end{definition}	
		Polarisations exist for any Hamiltonian, this is asserted by the example
		\begin{equation*}
			H[w_1,w_2,\dots,w_{\barp}] = \frac{1}{\barp}\left(\mathcal{H}[w_1]+\mathcal{H}[w_2]+\cdots+\mathcal{H}[w_{\barp}]\right).
		\end{equation*}	
We may impose the polarisation directly on the density $\mathcal{G}((u_J^{\alpha}))$, letting
\[
    H[w_1,w_2,\ldots,w_{\barp}] = \int_\Omega G[w_1,w_2,\ldots,w_{\barp}]\,\mathrm{d}x.
\]
The conditions \eqref{eq:consist}, \eqref{eq:cyclic} are then inherited as
\[
G(u,u,\ldots,u)=\mathcal{G}(u),\qquad G[w_1,w_2,\dots,w_{\barp}]=G[w_2,\dots,w_{\barp},w_1].
\]
In Section \ref{se:lin} we will discuss local order of consistency, it will then be convenient to make the stronger assumption that $\mathcal{H}$ and $H$ are at least twice Fr\'{e}chet differentiable. 
To distinguish from the weaker notion of variational (G\^{a}teaux) derivative, we replace
$\delta$ by $\partial$, noting that the first derivative in the two definitions are the same when they both exist.
We then find from \eqref{eq:consist} and \eqref{eq:cyclic} that the Fr\'{e}chet derivatives satisfy the relation
\begin{equation} \label{eq:firstder}
\frac{\partial\mathcal{H}}{\partial u}[u] = \barp\,\frac{\partial H}{\partial w_1}[u,\dots,u].
\end{equation}
For the second derivatives, we find the identity
\begin{equation} \label{eq:2dpolar}
\frac{\partial^2 H}{\partial w_1\partial w_j}
[u,\ldots,u] = \frac{\partial^2 H}{\partial w_1\partial w_{\barp+2-j}}[u,\ldots,u],\quad
j=2,\ldots, \lfloor \barp/2\rfloor + 1,
\end{equation}
which is used to compute
\begin{equation}  \label{eq:secder}
\frac{\partial^2 \cal H}{\partial u^2}[u]=
\left\{
\begin{array}{ll}
\displaystyle{\barp\left(\frac{\partial^2 H}{\partial w_1^2}
+2\sum_{\ell=2}^{\frac{\barp+1}{2}}\frac{\partial^2 H}{\partial w_1\partial w_\ell}\right)},
&\barp\ \mbox{odd,} \\
\displaystyle{\barp\left(\frac{\partial^2 H}{\partial w_1^2}
+2\sum_{\ell=2}^{\frac{\barp}{2}}\frac{\partial^2 H}{\partial w_1\partial w_\ell}
+ \frac{\partial^2H}{\partial w_1\partial w_{\frac{\barp}{2}+1}} \right)},
&\barp\ \mbox{even,}
\end{array}
\right.
\end{equation}
all second derivatives on the right being evaluated at $[u,\dots,u]$.

\subsubsection{Polynomial Hamiltonians}	
		The polarisation of polynomial Hamiltonians will be key to constructing linearly implicit schemes. 
		We will now explain in detail how to do this, and we begin with an example term in the 
		integrand $\mathcal{G}[u]=\mathcal{G}(\partial_J^{\alpha} u)$ depending on just one scalar indeterminate, namely $\mathcal{G}(z)=z^p$ where $z=\partial_J u^\alpha$ for some $(J, \alpha)$
		and where $p\leq 4$. 
		This example is important not only as a simple illustration of the procedure, but also because terms of this type are common in many of the Hamiltonians found in physics. As we will see in the next section, it will be natural to use two arguments, $\barp=2$, in the polarised Hamiltonian. In fact, we need to restrict ourself to cases with polynomial Hamiltonians for our technique to yield linearly implicit schemes. Then, by using $\barp \geq\lceil p/2\rceil$, we can obtain polarised Hamiltonians which are at most quadratic in each argument. We call these quadratic polarisations. We see that if $\barp=2$ then cyclic is the same as symmetric $G(u,v)=G(v,u)$, and
 the possible quadratic polarisations for $p=2,3,4$ are respectively, 
		\begin{align}
	p&=2:&			G(u,v)&=\theta\frac{{u}^2+{v}^2}2+(1-\theta){u}{v} \,\label{eq:hh1},\quad\theta\in[0,1], \\
	p&=3:&			G(u,v)&={u}{v}\frac{{u}+{v}}{2},\label{eq:hh2}\\
	p&=4:&			G(u,v)&={u}^2{v}^2.\label{eq:hh3}
		\end{align}
		Note that for these monomials
		both the third and fourth degree case are uniquely given, but the second degree case is not. In Section~\ref{se:stability}
		we will consider how the choice of $\theta$ influences the stability of the scheme for a term which appears frequently in PDEs.
		
We now consider the general case when $\mathcal{G}[u]$ is a multivariate polynomial in $N_\nu$ variables of degree $p$. It suffices in fact to let $\mathcal{G}((u_J^{\alpha}))$ be a monomial since each term can be treated separately, for $u\in\mathbb{R}^{N_\nu}$. For a convenient notation, we rename the vector of indeterminates $(u_J^\alpha)$ by using a single index i.e. $u=(u_1,\ldots,u_{N_\nu})$ and write 
\begin{equation} \label{eq:polar}
   \mathcal{G}(u) = u_{i_1}u_{i_2}\dots u_{i_p}.
\end{equation}
One may use the following procedure for obtaining a quadratic polarisation from \eqref{eq:polar}
\begin{enumerate}
\item Group the factors of the right hand side of \eqref{eq:polar} into pairs
$z_r = u_{i_{2r-1}}u_{i_{2r}}$ and if $p$ is odd $z_{\barp}=u_{i_p}$. Set
\[
  K(z_1,\ldots,z_{\barp})=z_1\cdots z_{\barp}.
\]
Note that there are potentially many ways of ordering the factors in \eqref{eq:polar} which give rise to different polarisations.
\item Symmetrise $K$ with respect to the cyclic subgroup of permutations. Letting the left shift permutation $\sigma$ be defined through
$\sigma K(z_1,\ldots,z_{\barp}) = K(z_2,\ldots,z_{\barp},z_1)$, we set
\[
    G(z_1,\ldots,z_{\barp})=\frac{1}{\barp}\sum_{k=1}^{\barp} \sigma^{k-1}K(z_1,\ldots,z_{\barp}).
\]
The resulting $G$ is now both consistent \eqref{eq:consist} and cyclic \eqref{eq:cyclic}.
\end{enumerate}

\subsection{Linearly Implicit Methods}\label{se:lin}
We may now define the discrete variational derivative for this polarised Hamiltonian as a generalisation of \eqref{eq:dg1} and \eqref{eq:dg2}. We let 
\[
   \frac{\delta H}{\delta(w_1,\dots,w_{\barp + 1})}
\]
be a continuous function of $\barp+1$ arguments, satisfying
\begin{equation} \label{eq:mdg1}
H[w_2,\dots,w_{\barp +1}]-H[w_1,\dots,w_{\barp}] =
\int_\Omega \frac{\delta H}{\delta(w_1,\dots,w_{\barp + 1})}(w_{\barp +1}-w_1)\,\mathrm{d}x,
\end{equation}
\begin{equation} \label{eq:mdg2}
\barp\,\frac{\delta H}{\delta(u,\dots,u)} = \frac{\delta\mathcal{H}}{\delta u}.
\end{equation}
Our standard example will be a generalisation of the AVF discrete variational derivative, which we define as
\begin{equation}\label{eq:tja}
\frac{\delta \havf }{\delta(w_1,\ldots,w_{\barp +1})}=
\int_0^1 \frac{\delta H}{\delta w_1}[\xi w_{\barp + 1}+(1-\xi) w_1,w_2,\ldots,w_{\barp}]
\,\mathrm{d}\xi.
\end{equation}
Here the variational derivative on the right hand side, $\frac{\delta H}{\delta w_1}$ is defined as before, considering $H$ as a function of 
its first argument only, leaving the others fixed.
Similar discrete variational derivatives could be derived in a number of different ways. In particular one finds that when the function $H$ is quadratic in all its arguments, the approach used in deriving the Furihata methods would lead to a discrete variational derivative which is identical to that of the AVF-method. 

Now we define the polarised discrete variational derivative scheme and prove that, under some assumptions, this scheme is conservative, linearly implicit and has formal order of consistency two.

\begin{definition}
For a Hamiltonian PDE of the form \eqref{eq:hamileq}, let $H$ be a polarised Hamiltonian
of $\barp$ arguments, satisfying \eqref{eq:consist} and \eqref{eq:cyclic}, and suppose that approximations
$U^j$ to $u(j\Delta t,\cdot)$ are given for $j=0,\ldots,\barp-1$.
\begin{itemize}
\item
	The polarised DVD (PDVD) scheme is given as 
	\begin{equation}\label{eq:scheme}				
		\frac{U^{n+\barp}-U^{n}}{\barp\Delta t}=\barp
		D\frac{\delta H}{\delta(U^{n},\dots,U^{n+\barp})},\quad n\geq 0.
	\end{equation}
\item
 $D$ is a skew-symmetric operator approximating $\mathcal{D}$.
 In \eqref{eq:scheme}, $D$ may depend on $U^{n+j}$, $1\leq j \leq  \barp-1$
 \footnote{$D$ should not depend on $U^{n+k}$ since otherwise the method would no longer be linearly implicit.},
  and be consistent
\begin{equation}\label{eq:Dconsistent}
D[u,\ldots,u]=\mathcal{D}[u].
\end{equation}
$D$ is called cyclic if
\begin{equation} \label{eq:Dcyclic}
D[w_1,w_2,\ldots,w_{k-1}]=D[w_{2},\ldots,w_{k-1},w_1].
\end{equation}
\item
If the discrete variational derivative is given by
\eqref{eq:tja}, then the scheme is called the polarised AVF  (PAVF) scheme. 	
\end{itemize}
\end{definition}	

	\begin{theorem}\label{th:cons}
	The scheme \eqref{eq:scheme}  is conservative
	in the sense that
 \begin{equation*}
      H[U^{n+1},\ldots,U^{n+k}]=H[U^0,\ldots,U^{k}],\quad\forall n\geq 1.
 \end{equation*}
 for any polarised Hamiltonian function $H$.
	\end{theorem}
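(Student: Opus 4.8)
The plan is to prove the invariance one time step at a time, i.e.\ to show that
\[
H[U^{n+1},\ldots,U^{n+\barp}]=H[U^{n},\ldots,U^{n+\barp-1}]\qquad\text{for every }n\ge 0,
\]
and then the asserted identity follows by a trivial induction on $n$, the common value being the value of $H$ on the initial data $(U^0,\ldots,U^{\barp-1})$. This per-step identity is in turn a direct consequence of the defining property \eqref{eq:mdg1} of the discrete variational derivative together with the skew-symmetry of $D$, with the scheme \eqref{eq:scheme} used to replace the increment $U^{n+\barp}-U^n$.

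To carry this out, I would evaluate \eqref{eq:mdg1} at $w_j=U^{n+j-1}$ for $j=1,\ldots,\barp+1$. Then $w_1=U^n$ and $w_{\barp+1}=U^{n+\barp}$, so the left-hand side of \eqref{eq:mdg1} is exactly $H[U^{n+1},\ldots,U^{n+\barp}]-H[U^{n},\ldots,U^{n+\barp-1}]$, and \eqref{eq:mdg1} gives
\[
H[U^{n+1},\ldots,U^{n+\barp}]-H[U^{n},\ldots,U^{n+\barp-1}]
=\int_\Omega \frac{\delta H}{\delta(U^n,\ldots,U^{n+\barp})}\,\bigl(U^{n+\barp}-U^n\bigr)\,\mathrm{d}x.
\]
Now substitute the increment from the scheme \eqref{eq:scheme}, namely $U^{n+\barp}-U^n=\barp^2\Delta t\,D\,\frac{\delta H}{\delta(U^n,\ldots,U^{n+\barp})}$, and abbreviate $v:=\frac{\delta H}{\delta(U^n,\ldots,U^{n+\barp})}$; the right-hand side becomes $\barp^2\Delta t\int_\Omega v\,(Dv)\,\mathrm{d}x$. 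Since $D$ is skew-symmetric in the sense of \eqref{eq:skew}, putting $u=w=v$ there yields $\int_\Omega v\,(Dv)\,\mathrm{d}x=0$, so the difference on the left vanishes. This is the per-step identity, and induction on $n$ finishes the proof.

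There is no real obstacle here; the only points requiring care are bookkeeping ones. First, one must align the arguments in \eqref{eq:mdg1} correctly so that the two evaluations of $H$ are precisely the consecutive windows $[U^{n},\ldots,U^{n+\barp-1}]$ and $[U^{n+1},\ldots,U^{n+\barp}]$; note that for this conservation statement neither the consistency relation \eqref{eq:mdg2} nor the cyclicity \eqref{eq:cyclic} of $H$ is actually needed (they enter only in the order-of-consistency analysis). Second, the function $v=\frac{\delta H}{\delta(U^n,\ldots,U^{n+\barp})}$ must be an admissible argument for $D$ and for the $L^2$ pairing appearing in \eqref{eq:skew}, which is guaranteed by the standing regularity and boundary assumptions (e.g.\ periodic boundary conditions) adopted earlier. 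Everything else is a one-line substitution.
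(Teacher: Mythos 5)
Your proposal is correct and follows essentially the same route as the paper, whose proof is just the one-line remark that the claim follows by induction from \eqref{eq:mdg1}; you have simply filled in the intended details (substituting the increment from \eqref{eq:scheme} and using the skew-symmetry of $D$ to kill the integral). The telescoping/per-step identity and the induction are exactly what the paper's terse proof appeals to.
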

\begin{proof}
By induction, this is an immediate consequence of \eqref{eq:mdg1}

\end{proof}	
In a framework as general as the one presented here, it is not possible to present a general analysis for convergence or the order of the truncation error.
However, it seems plausible that a necessary condition to obtain a prescribed order of convergence can be derived through a formal Taylor expansion of the local truncation error, we denote this \emph{the formal order of consistency}.

\begin{theorem}\label{th:order}\hspace*{0 cm}
\begin{itemize}	
\item	The PAVF scheme has formal order of consistency one
for any polarised Hamiltonian, and skew-symmetric operator $D$ satisfying \eqref{eq:Dconsistent}.
\item If in addition \eqref{eq:Dcyclic} is satisfied, the scheme has formal order of consistency two.
\end{itemize}
	\end{theorem}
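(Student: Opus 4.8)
The plan is to verify the claimed orders of consistency by inserting the exact solution $u(x,t)$ into the scheme \eqref{eq:scheme} and Taylor-expanding the residual in powers of $\Delta t$ about some reference point, say $t^* = t^n + \tfrac{\barp}{2}\Delta t$ (the midpoint of the stencil), which should make the symmetry structure most transparent. Write $U^{n+j} = u(\cdot, t^n + j\Delta t)$ and denote by $R$ the difference between the left-hand side and the right-hand side of \eqref{eq:scheme}. The left-hand side $\frac{U^{n+\barp}-U^n}{\barp\Delta t}$ is a symmetric difference quotient about $t^*$, hence equals $u_t(\cdot,t^*) + O(\Delta t^2)$ automatically; in particular it already has no $O(\Delta t)$ term. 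So all the work is on the right-hand side: one must show $\barp\, D\frac{\delta H}{\delta(U^n,\dots,U^{n+\barp})} = \mathcal D \frac{\delta \mathcal H}{\delta u}\big|_{t^*} + O(\Delta t)$ in general, and $+\,O(\Delta t^2)$ when $D$ is cyclic and the AVF discrete variational derivative \eqref{eq:tja} is used.

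First I would handle the operator factor $D$: by consistency \eqref{eq:Dconsistent}, $D[U^{n+1},\dots,U^{n+\barp-1}] = \mathcal D[u(\cdot,t^*)] + O(\Delta t)$, and if $D$ is cyclic \eqref{eq:Dcyclic} the arguments $U^{n+1},\dots,U^{n+\barp-1}$ are a symmetric set about $t^*$ in the sense that the cyclic-averaging kills the first-order term, giving $\mathcal D[u(\cdot,t^*)] + O(\Delta t^2)$. Then I would expand the discrete variational derivative. Using \eqref{eq:mdg2}, at coincident arguments $\barp\frac{\delta H}{\delta(u,\dots,u)} = \frac{\delta\mathcal H}{\delta u}$, so the leading term is correct and order one is immediate for any $D$ satisfying \eqref{eq:Dconsistent}. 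For order two one must show the $O(\Delta t)$ correction to $\barp\frac{\delta H}{\delta(U^n,\dots,U^{n+\barp})}$ vanishes. This is where the specific AVF form \eqref{eq:tja} and the cyclicity of $H$ enter: I would Taylor-expand \eqref{eq:tja}, writing each $U^{n+j} = u^* + (j-\tfrac{\barp}{2})\Delta t\, u_t^* + O(\Delta t^2)$, and collect the linear-in-$\Delta t$ term. It will be a sum of terms of the form $(j-\tfrac{\barp}{2})\,\frac{\partial^2 H}{\partial w_1 \partial w_j}[u^*,\dots,u^*]\, u_t^*$ (plus a contribution from the $\xi$-integral that, because $\int_0^1(\xi-\tfrac12)\,d\xi = 0$ up to the recentering, needs care), and the identity \eqref{eq:2dpolar} — which pairs $\frac{\partial^2 H}{\partial w_1\partial w_j}$ with $\frac{\partial^2 H}{\partial w_1\partial w_{\barp+2-j}}$ at coincident arguments — together with $\sum_j (j-\tfrac{\barp}{2}) = 0$ over a symmetric index range should force this sum to cancel in pairs.

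The main obstacle I anticipate is bookkeeping the interaction between the $\xi$-integral in \eqref{eq:tja} and the off-center evaluation point: the first argument of $\frac{\delta H}{\delta w_1}$ in \eqref{eq:tja} is $\xi U^{n+\barp} + (1-\xi)U^n$, which is \emph{not} centered at $u^*$ for fixed $\xi$, so naively expanding gives a spurious first-order term $\int_0^1(\xi-\tfrac12)\,d\xi\cdot(\text{stuff}) = 0$ only after one correctly identifies that the relevant combination is the symmetric one; getting the center of expansion and the symmetry argument to line up cleanly, rather than through a brute-force expansion, is the delicate point. A secondary subtlety is that $\frac{\delta H}{\delta w_1}$ is itself a variational (not merely pointwise) object, so "second derivative" means second Fréchet/variational derivative and one must invoke the twice-differentiability assumption flagged before \eqref{eq:firstder} and use \eqref{eq:2dpolar}, \eqref{eq:secder} at the level of these operators; I would phrase the expansion abstractly in terms of $\frac{\partial H}{\partial w_1}$ and $\frac{\partial^2 H}{\partial w_1\partial w_j}$ to avoid coordinate clutter. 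I would close by combining the two expansions via the product rule: $R = u_t^* - \mathcal D^* \frac{\delta\mathcal H}{\delta u}^* + O(\Delta t^2) = O(\Delta t^2)$, since $u$ solves \eqref{eq:hamileq}, and noting that without cyclicity of $D$ only the $O(\Delta t)$ operator error survives, giving formal order one in that case.
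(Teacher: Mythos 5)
Your proposal is correct, and it organizes the Taylor expansion differently from the paper. The paper expands everything about the left endpoint $t^n$: the divided difference is written as $\partial_t u^n+\tfrac{k\Delta t}{2}\partial_t^2u^n+\mathcal{O}(\Delta t^2)$ and converted via the PDE into Fr\'echet derivatives of $\mathcal{D}$ and $\mathcal{H}$, and the first-order terms of the right-hand side are then matched against these; this requires not only \eqref{eq:2dpolar} but also \eqref{eq:firstder} and \eqref{eq:secder} (expressing $\partial\mathcal{H}/\partial u$ and $\partial^2\mathcal{H}/\partial u^2$ through derivatives of $H$ at coincident arguments) and the analogous relation between $\partial D/\partial w_1$ and $\partial\mathcal{D}/\partial u$. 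Your midpoint-centred expansion buys a cleaner cancellation: the divided difference has no $\mathcal{O}(\Delta t)$ term, the $\xi$-average removes the first-slot contribution since $\int_0^1(2\xi-1)\,\mathrm{d}\xi=0$, the interior slots cancel pairwise via \eqref{eq:2dpolar} because their time offsets are antisymmetric about $t^*$, and for cyclic $D$ all $\partial D/\partial w_j$ coincide at coincident arguments while the offsets sum to zero; consequently you never need \eqref{eq:secder} nor the second time derivative of the exact solution, only \eqref{eq:mdg2}/\eqref{eq:firstder} for the leading term. One bookkeeping point you should fix when writing it out: with slot indices $j=2,\dots,k$ the argument in slot $j$ is $U^{n+j-1}$, whose offset is $\bigl(j-1-\tfrac{k}{2}\bigr)\Delta t$, not $\bigl(j-\tfrac{k}{2}\bigr)\Delta t$; with this shift the pairing $j\leftrightarrow k+2-j$ from \eqref{eq:2dpolar} gives coefficients $\bigl(j-1-\tfrac{k}{2}\bigr)+\bigl(k+1-j-\tfrac{k}{2}\bigr)=0$ (and for $k$ even the middle slot has offset zero by itself), whereas with the coefficients as you stated them the paired sum would be $2$ and would not cancel. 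The order-one statement is immediate in both set-ups from \eqref{eq:mdg2} and \eqref{eq:Dconsistent}, exactly as you say.
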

\begin{proof}
We show that when the exact solution is substituted into \eqref{eq:scheme} where the
discrete variational derivative is given by \eqref{eq:tja}, then the residual is $\mathcal{O}(\Delta t^2)$. Throughout the proof we assume the existence of Fr\'{e}chet derivatives.
Writing, for any $j$, $u^j=u(\cdot,t^j)$ for the exact local solution at $t=t^j$, we get for the
left hand side 
\begin{multline} \label{eq:ordlhs}
\frac{u^{n+\barp}-u^n}{\barp\Delta t}=\partial_t u^n + \frac{\barp\,\Delta t}{2}\partial_t^2 u^n +\mathcal{O}(\Delta t^2)
= \left.\mathcal{D}
\frac{\partial\mathcal H}{\partial u}\right|_{u^n}\\
+\frac{\barp\Delta t}{2}\left.\left(\frac{\partial\cal D}{\partial u}(\partial_t u^n)
\frac{\partial\cal H}{\partial u}+\mathcal{D}\frac{\partial^2\cal H}{\partial u^2}
(\cdot,\partial_t u^n)\right)\right|_{u^n} + \mathcal{O}(\Delta t^2).
\end{multline}
Next we expand  \eqref{eq:tja} to get
\begin{multline*}
\frac{\delta \havf }{\delta(u^n,\ldots,u^{n+\barp})} =
\int_0^1 \frac{\delta H}{\delta w_1}(\xi u^{n+\barp}+(1-\xi)u^n,
u^{n+1},\ldots,u^{n+\barp-1})\,\mathrm{d}\xi \\
=\left.\frac{\partial H}{\partial w_1}\right|_{\bf u}+
\left(\frac{k}{2}\left.\frac{\partial^2 H}{\partial w_1^2}\right|_{\bf u}
+\Delta t\sum_{j=2}^\barp (j-1)\left.\frac{\partial^2 H}{\partial w_1\partial w_j}\right|_{\bf u}
\right) (\cdot,\partial_t u^n) +\mathcal{O}(\Delta t^2)
\end{multline*}
where $\mathbf{u}=(u^n,\ldots,u^n)$.
Using first \eqref{eq:2dpolar} and then \eqref{eq:firstder}, \eqref{eq:secder} we find
\begin{equation} \label{eq:avfexpanded}
\frac{\delta \havf }{\delta(u^n,\ldots,u^{n+\barp})} =
\frac{1}{\barp}\left.\frac{\partial\mathcal{H}}{\partial u}\right|_{u^n}+
\frac{\Delta t}{2}\left.\frac{\partial^2\mathcal{H}}{\partial u^2}\right|_{u^n} (\cdot,\partial_t u^n)+\mathcal{O}(\Delta t^2).
\end{equation}
Expanding $D$ we get
\begin{equation} \label{eq:Dexpand}
D[u^{n+1},\ldots,u^{n+\barp-1}]=\mathcal{D}[u^n] + 
\Delta t\sum_{j=1}^{\barp-1} j \left.\frac{\partial D}{\partial w_j}\right|_{\mathbf{u}}(\partial_tu^n)+\mathcal{O}(\Delta t^2).
\end{equation}
If the cyclicity condition \eqref{eq:Dcyclic} holds for $D$, we can simplify \eqref{eq:Dexpand} 
to obtain
\begin{multline} \label{eq:Dexpanded}
D[u^{n+1},\ldots,u^{n+\barp-1}]=\mathcal{D}[u^n] + \frac{\barp(\barp-1)\Delta t}{2}\left.\frac{\partial D}{\partial w_1}\right|_{\bf u}(\partial_t u^n)+ \mathcal{O}(\Delta t^2)\\
= \mathcal{D}[u^n] + \frac{\barp\Delta t}{2}
\left.\frac{\partial\cal D}{\partial u}\right|_{u^n}(\partial_t u_n)+\mathcal{O}(\Delta t^2).
\end{multline}
By substituting into \eqref{eq:scheme} the expressions \eqref{eq:ordlhs}, \eqref{eq:avfexpanded} and \eqref{eq:Dexpanded}, all terms of zeroth and first order cancel and we are left with $\mathcal{O}\left((\Delta t^2)\right)$. 
\end{proof}

	 	\begin{theorem}\label{th:linimp}
	Suppose that the polarised Hamiltonian $H$ is a quadratic polynomial in each of its arguments, then the  PAVF scheme  
         is linearly implicit  
 
	\end{theorem}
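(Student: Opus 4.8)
The plan is to unwind one step of the PAVF scheme \eqref{eq:scheme} with the AVF discrete variational derivative \eqref{eq:tja} and to exhibit it as a linear equation for the single unknown $U^{n+\barp}$, given the already computed $U^{n},U^{n+1},\ldots,U^{n+\barp-1}$. Once that form is reached, the statement is just the meaning of ``linearly implicit'': advancing the solution costs the solution of precisely one linear system.

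The one substantive ingredient is that the AVF discrete variational derivative \eqref{eq:tja} depends affinely on its last argument. First I would note that, since the density of $H$ is a polynomial of degree at most two in $w_1$ and its spatial derivatives, and since the Euler operator \eqref{eq:euleropdef} is linear and sends an expression of degree at most two in $w_1$ to one of degree at most one, the slot-one variational derivative $\frac{\delta H}{\delta w_1}[w_1,w_2,\ldots,w_{\barp}]$ (formed with $w_2,\ldots,w_{\barp}$ frozen) is affine in $w_1$:
\[
\frac{\delta H}{\delta w_1}[w_1,\ldots,w_{\barp}] = \mathcal{A}[w_2,\ldots,w_{\barp}]\,w_1 + b[w_2,\ldots,w_{\barp}],
\]
for a linear differential operator $\mathcal{A}$ and a function $b$, both independent of $w_1$ and of $w_{\barp+1}$. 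Substituting $w_1\mapsto \xi w_{\barp+1}+(1-\xi)w_1$ then makes the integrand in \eqref{eq:tja} affine in $\xi$, so the $\xi$-integration merely replaces $w_1$ by the average $\tfrac12(w_1+w_{\barp+1})$, giving
\[
\frac{\delta \havf}{\delta(w_1,\ldots,w_{\barp+1})} = \mathcal{A}[w_2,\ldots,w_{\barp}]\,\frac{w_1+w_{\barp+1}}{2}+b[w_2,\ldots,w_{\barp}].
\]

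Next I would substitute $w_j=U^{n+j-1}$, $j=1,\ldots,\barp+1$, into \eqref{eq:scheme}. By the standing assumption recorded in the footnote to the definition, $D=D[U^{n+1},\ldots,U^{n+\barp-1}]$ does not involve $U^{n+\barp}$, hence $D$, together with $\mathcal{A}[U^{n+1},\ldots,U^{n+\barp-1}]$ and $b[U^{n+1},\ldots,U^{n+\barp-1}]$, is entirely determined by known data. The scheme then collapses to
\[
\Bigl(\tfrac{1}{\barp\Delta t}\,I-\tfrac{\barp}{2}\,D\,\mathcal{A}\Bigr)U^{n+\barp} = \tfrac{1}{\barp\Delta t}\,U^{n}+\barp\,D\bigl(\tfrac12\,\mathcal{A}\,U^{n}+b\bigr),
\]
a linear equation for $U^{n+\barp}$ with a known right-hand side; after spatial discretisation (Section~\ref{se:spatial}) this becomes exactly one linear system per time step, which is the definition of a linearly implicit method.

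Every step here is an identity, so there is no genuine obstacle; the only point requiring care is that the argument uses \emph{both} hypotheses together: the quadraticity of $H$ in its first argument, which is what makes $\frac{\delta H}{\delta w_1}$ affine in $w_1$ and, after the $\xi$-average, in $U^{n+\barp}$; and the structural restriction that $D$ is evaluated only at the interior stages $U^{n+1},\ldots,U^{n+\barp-1}$. If $H$ had a term of degree three or more in one of its slots, or if $D$ were allowed to depend on $U^{n+\barp}$, the step equation would be genuinely nonlinear. (By cyclicity, quadraticity in one slot already forces it in every slot, so stating the hypothesis symmetrically is harmless, but only quadraticity in the first slot is used above.)
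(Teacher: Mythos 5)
Your proposal is correct and follows essentially the same route as the paper's proof: quadraticity of $H$ in its first slot makes $\frac{\delta H}{\delta w_1}$ (via the Euler operator) of degree at most one in that slot, hence the AVF discrete variational derivative \eqref{eq:tja} is affine in $U^{n+\barp}$, and since $D$ is not allowed to depend on $U^{n+\barp}$ the step equation is a single linear system. You simply make explicit the affine decomposition and the resulting linear system that the paper leaves implicit.
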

	\begin{proof}
		Since $H$ is at most quadratic in the first argument, it follows from
		\eqref{eq:euleropdef} that $\frac{\delta H}{\delta w_1}$ is of degree at most 1 in its first argument, 
		and so we see from \eqref{eq:tja} that
\[
   \frac{\delta H}{\delta(U^n,\ldots,U^{n+\barp})}
\]
is linear in $U^{n+\barp}$. 
Since $D$ does not depend on $U^{n+\barp}$ we conclude that the scheme  \eqref{eq:scheme} is linearly implicit.
	\end{proof}
	
	In some cases one wishes to have time-symmetric numerical schemes, see for example~\cite{MR2221614}. The numerical
	scheme~\eqref{eq:scheme} will in general not be symmetric, however it is not hard to modify the procedure to yield
	symmetric schemes. One needs to polarise $\mathcal{H}$ such that $H$ is 
	invariant also when the order of its arguments is reversed, it turns out that this can be achieved by symmetrising over the dihedral group rather than just the cyclic one.
 A similar adjustment must be made for
	$D$. 

		We remark that one can construct explicit schemes by using $p$ time steps (as opposed to $\barp$) in $H$
		such that $H$ becomes $p$-linear (as opposed to $\barp$-quadratic).
		The rest of the procedure for the explicit case is the same as for the linearly implicit case. 
		Clearly, one expects that explicit schemes will have more severe stability restrictions 
		 than the linearly implicit ones. 
		
		Since these multistep schemes need the $k$ previous values, it is
		not self-starting. We have to provide the starting-values $U^{1},\dots,U^{k}$ in addition to the initial value $U^0$. Usually these are computed using another sufficiently accurate conservative scheme, such as for example the AVF scheme. Another possibility is to use any integrator and integrate to machine precision.

\subsection{Stability}\label{se:stability}
	In~\cite{dahlbyowren} we studied linearly implicit schemes for the cubic Schr\"{o}dinger equation, and found that two-step schemes can develop a two-periodic instability in time. 
	We also saw that this can be remedied by choosing a different polarisation of the Hamiltonian.
	
	As it turns out, a common case is when the Hamiltonian is a univariate polynomial
	of degree 4 or less. If we polarise this Hamiltonian using two time-steps, we get three linearly independent
	$H$, corresponding to~\eqref{eq:hh1}, \eqref{eq:hh2}, and~\eqref{eq:hh3}.
	The third and fourth degree Hamiltonians are uniquely given.
	 However, in the second degree case we can choose $\theta\in[0,1]$
	 such that the scheme becomes unstable.  
	Since Hamiltonians of the type~\eqref{eq:hh1} appear in many important PDEs
	it may be useful to 
	determine which $\theta\in[0,1]$ lead to unstable schemes.
	
	We choose to study the test equation with Hamiltonian
	\begin{equation*}
		\mathcal{H}[u]=\frac12\int_\Omega u_x^2\,\mathrm{d}x,
	\end{equation*}
	and a skew-symmetric operator $\mathcal{D}$ which satisfies the eigenvalue equation
	\begin{equation*}
		\mathcal{D}\mathrm{e}^{\mathrm{i}kx}=\mathrm{i}\lambda_k\mathrm{e}^{\mathrm{i}kx}, \quad \lambda_k\in\mathbb{R}
	\end{equation*}
	for all integers $k$. The Airy equation
	\begin{equation*}
		u_t+u_{xxx}=0
	\end{equation*}
	 is of this type with $\mathcal{D}=-\partial_x$ and $\lambda_k=-k$. 
	Other equations which have such terms in the
	Hamiltonian include the nonlinear Schr\"{o}dinger equation, the linear wave equation, the KdV equation, and the Kadomtsev-Petviashvili equation. 
	
	Rewriting~\eqref{eq:hh1} gives
	\begin{equation*}
		H[v,u]=\frac12\int\left(\theta\frac{u_x^2+v_x^2}2+(1-\theta)u_{x}v_{x}\right) \,\mathrm{d}x.
	\end{equation*}
	And the numerical scheme is
	\begin{equation}\label{eq:numairy}
		\frac{U^{n+2}-U^{n}}{2\Delta t}=-\mathcal{D}\left(\theta\frac{U_{xx}^{n+2}+U_{xx}^{n}}2+(1-\theta)U_{xx}^{n+1}\right).
	\end{equation}
	Since this is a linear equation we can use von Neumann stability analysis~\cite{MR0042799}. 
	We insert the ansatz
	\begin{equation*}
		U^n(x)=\zeta^n\mathrm{e}^{\mathrm{i}kx}
	\end{equation*}
	to obtain the quadratic equation
	\begin{equation}\label{eq:stabpol}
		(1-\theta\tau\mathrm{i})\zeta^2-2(1-\theta)\tau\mathrm{i}\zeta-(1+\theta\tau\mathrm{i})=0,\quad\tau=\lambda_k\Delta t k^2.
	\end{equation}	
	A necessary condition for stability is $|\zeta|\leq1$ which implies
	\begin{equation*}
		\theta\geq\frac12-\frac1{2\tau^2}.
	\end{equation*}
	Assuming that $\{\lambda_k k^2\}_{k\in\mathbb{Z}}$ is unbounded, we must require that 
	$\theta$ is chosen greater than or equal $\frac12$.  
	This is exactly the condition found in~\cite{dahlbyowren} for the cubic Schr\"odinger equation.  When $\theta\geq\frac12$
	the roots of \eqref{eq:stabpol} satisfy $|\zeta_1|=|\zeta_2|=1$.
	
	In Figure~\ref{fi:airystab} we solve the Airy equation with the scheme~\eqref{eq:numairy} using $\theta=0.5$ and $\theta=0.49$.
	We use the initial value $u(x,0)=\sin(x)$, which, in the exact case, yields the traveling wave solution $u(x,t)=\sin(x+t)$.
	The  $\theta=0.49$ solution blows up in few time steps, while the $\theta=0.50$ solution shows no signs of instability.
	Doing a discrete Fourier transform of the unstable solution we see that the instability starts at high frequencies, that is large $k$, which
	corresponds to the results shown above. 
	\begin{figure}
		\centering
		\includegraphics[width=0.8\textwidth]{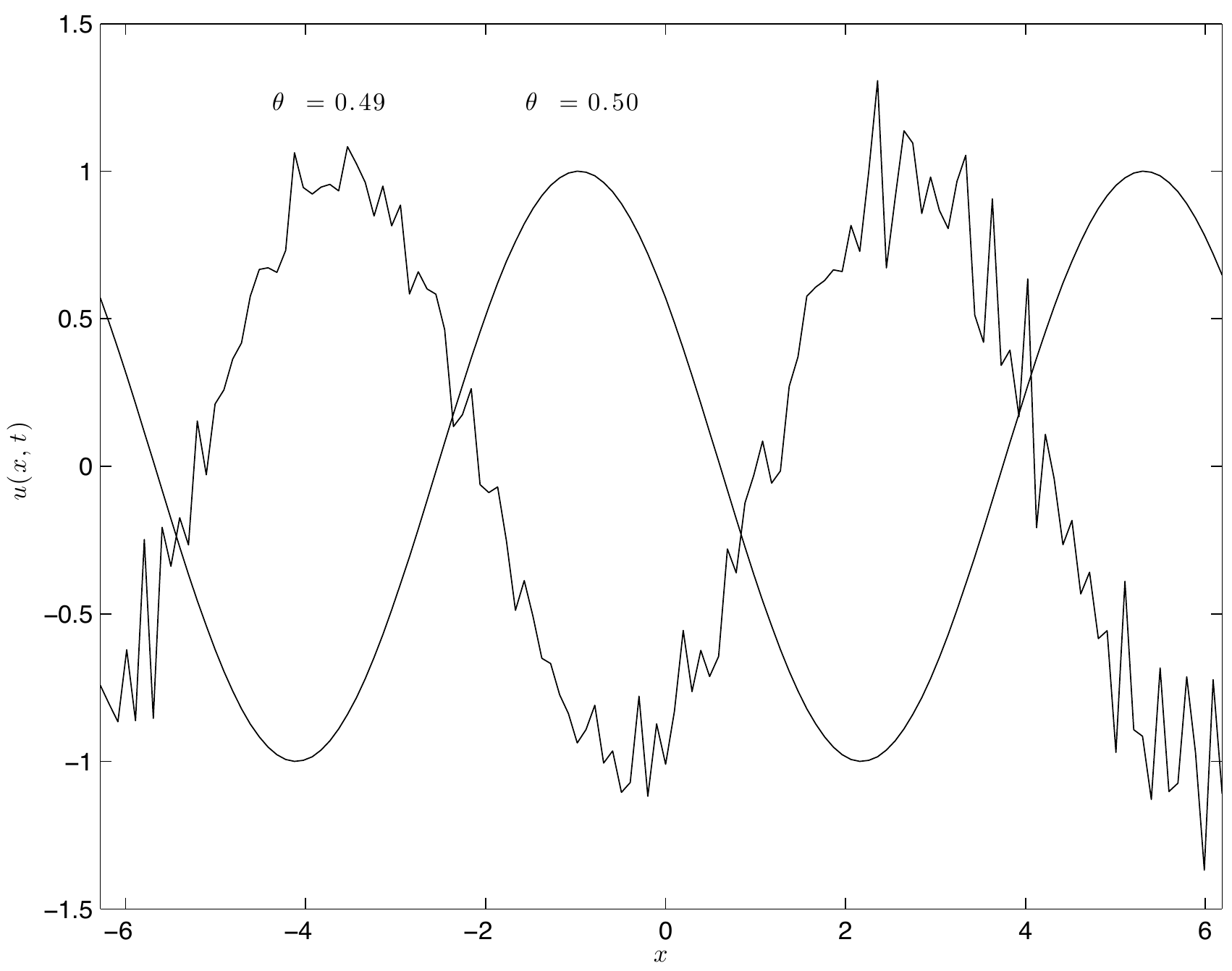}
		\caption{The numerical solution of the Airy equation with two different values of $\theta$. The two solutions are shown after $n=10^6$ time steps ($\theta=0.5$) and $n=115$ time steps ($\theta=0.49$). }
		\label{fi:airystab}
	\end{figure}
	
	There might be cases where the scheme develop instabilities due to spurious modes no matter what polarisation one chooses. A full stability analysis of either the fully or linearly implicit schemes is to our knowledge not been done. Standard linearisation techniques will usually lead to the conclusion that the schemes are neutrally stable. The nonlinear effects, however small, may still cause the scheme to be unstable. The tests we have done on a wide range of PDEs seem to indicate that the stability usually is very good, future work may shed a light on this issue.

\section{Space discretisation}\label{se:spatial}
	Until now we have mostly considered the situation where the PDE is discretised in time while remaining continuous in space. The methodology developed in the previous sections apply equally well to systems of ODEs.
	Arguably, the most straightforward approach is simply to discretise the space derivatives in the Hamiltonian, for instance by finite differences. This leads to
\[	
   \mathcal{H}(u) \longrightarrow \mathcal{H}_d(\mathbf{u}).
\]
One also needs to replace the skew-symmetric operator $\mathcal{D}$ by a skew-symmetric matrix $\mathcal{D}_d$.	
The fully implicit method reviewed in Section~\ref{sec:dg} is then just the discrete gradient method
\eqref{eq:metdg}, which conserves the discretised Hamiltonian $\mathcal{H}_d(\mathbf{u})$ in every time step.  

We consider now finite difference approximations.
 The function space to which the solution $u$ belongs, is replaced by a finite dimensional space with 
 functions on a grid indexed by $I_g\subset \mathbb{Z}^d$.
 We use boldface symbols for these functions. Let there be $N_r$ grid points in the space direction $r$ so that
$\mathbf{N}=N_1\cdots N_d$ is the total number of grid points. We denote by $\mathbf{u}^{\alpha}$ the approximation
to $u^{\alpha}$ on such a grid, and by $\mathbf{u}$ the vector consisting of $(\mathbf{u}^1,\ldots,\mathbf{u}^m)$.
We will
replace each derivative $u_J^{\alpha}$ by a finite difference approximation $\delta_J\mathbf{u}^{\alpha}$, and replace the integral by a quadrature rule. 
%
%
%
We then let
\begin{equation} \label{eq:discH}
\mathcal{H}_d(\mathbf{u}) =  \sum_{\mathbf{i}\in I_g} b_{\mathbf{i}} (\mathcal{G}_d(  (\delta_J {\bf u})))_{\mathbf{i}}\, \Delta x.
\end{equation}
Here $\Delta x$ is the volume (length, area) of a grid cell and ${\bf b}=(b_{\mathbf{i}})_{\mathbf{i}\in I_g}$ are the weights in the quadrature rule. The discretised $\mathcal{G}_d$ has the same number of arguments as $\mathcal{G}$, and each input argument as well as the output are vectors in $\mathbb{R}^N$. We have here approximated the function $u_J^\alpha$ by a difference approximation
$\delta_J \mathbf{u}^{\alpha}$, where $\delta_J:\mathbb{R}^{\bf N}\rightarrow\mathbb{R}^{\bf N}$ is a linear map. As in the continuous case, we use square brackets, say $F[\mathbf{u}]$, as shorthand for a list of arguments involving difference operators $F[\mathbf{u}]=F(\mathbf{u},\delta_{J_1}\mathbf{u},\ldots,\delta_{J_q}\mathbf{u})$.
We compute
\begin{multline}
\mathcal{H}_d[\mathbf{u}] -\mathcal{H}_d[\mathbf{v}] =\\
\sum_{{\bf i}\in I_g} b_{\bf i} \sum_{J,\alpha}\int_0^1
\left(\frac{\partial\mathcal{G}_d}{\partial\delta_J\mathbf{u}^{\alpha}}\right)_{\bf i}[
\xi\mathbf{u}
+(1-\xi)\mathbf{v}] {\rm d}\xi  
(\delta_J(\mathbf{u}^{\alpha}-\mathbf{v}^{\alpha}))\;\Delta x\\= 
  \langle \frac{\delta\mathcal{H}_d}{\delta(\mathbf{v},\mathbf{u})}, \mathbf{u}-\mathbf{v}\rangle
  \label{eq:discip}
\end{multline}
where
\begin{equation*}
\frac{\delta\mathcal{H}_d}{\delta(\mathbf{v},\mathbf{u})}
= \sum_{J,\alpha} \delta_J^T\,B\, \left(\int_0^1\frac{\partial\mathcal{G}_d}{\partial\mathbf{u}_J}
[\xi\mathbf{u}^{\alpha}+(1-\xi)\mathbf{v}^{\alpha}]\,\mathrm{d}\xi\right),
\end{equation*}
$B$ is the diagonal linear map $B=\mathrm{diag}(b_{\mathbf{i}}),\ \mathbf{i}\in I_g$, and
the discrete inner product used in \eqref{eq:discip} is
\[
   \langle \mathbf{u}, \mathbf{v} \rangle = \sum_{\alpha,\mathbf{i}\in I_g} \mathbf{u}^{\alpha}_{\mathbf{i}} \mathbf{v}^{\alpha}_{\mathbf{i}}.
\]
Notice the resemblance between the operator acting on $\mathcal{G}_d$ in \eqref{eq:discip} and the continuous Euler operator in \eqref{eq:euleropdef}.  Alternatively, suppose that 
\begin{enumerate}
\item The spatially continuous method \eqref{eq:dgm} (using \eqref{eq:dgavf}) is discretised in space, using 
a skew-symmetric $\mathcal{D}_d$ and a selected set of difference quotients $\delta_J$ for each derivative $\partial_J$.
\item Considering \eqref{eq:varderdef} and \eqref{eq:euleropdef}, the choice of discretisation operators $\delta_J$ used in
$\partial\mathcal{G}/\partial u_J^{\alpha}[u]$ is arbitrary, but the corresponding $D_J$ is replaced by the transpose
$\delta_J^T$.
\end{enumerate}
In this case, using the same $\mathcal{D}_d$, an identical set of difference operators in discretising $\mathcal{H}$ \eqref{eq:discH}, and choosing all the quadrature weights $b_{\mathbf{i}}=1$ the resulting scheme would be the same as that given by procedure outlined in the two points above. That is, one can get the same scheme by either discretising the Hamiltonian in space first (and then deriving the scheme) or discretising the scheme in space first (and then deriving the conserved Hamiltonian). 

Letting the $r$th canonical unit vector in $\mathbb{R}^d$ be denoted $\mathbf{e}_r$, we define the most used first order difference operators
\begin{align*}
(\delta_{r}^+\mathbf{u})_{\mathbf{i}}&=\frac{\mathbf{u}_{\mathbf{i}+\mathbf{e}_r}-\mathbf{u}_{\mathbf{i}}}{\Delta x_r}, \\
(\delta_{r}^-\mathbf{u})_{\mathbf{i}}&=\frac{\mathbf{u}_{\mathbf{i}}-\mathbf{u}_{\mathbf{i}-\mathbf{e}_r}}{\Delta x_r}, \\
(\delta_{r}^{\langle 1\rangle}\mathbf{u})_{\mathbf{i}}&=\frac{\mathbf{u}_{\mathbf{i}+\mathbf{e}_r}
- \mathbf{u}_{\mathbf{i}-\mathbf{e}_r}}{2\Delta x_r}.
\end{align*}
These difference operators are all commuting, but only the last one is skew-symmetric. However, for the first two one has the useful identities
\begin{equation*}
    (\delta_{r}^+)^T = -\delta_{r}^ -,\qquad  (\delta_{r}^-)^T = -\delta_{r}^ +.
\end{equation*}
Higher order difference operators $\delta_J$ can generally be defined by taking compositions of these operators, in particular we shall consider examples in the next section using the second and third derivative approximations
\[
  \delta_r^{\langle 2\rangle} = \delta_r^+\circ\delta_r^-,\quad \delta^{\langle 3\rangle}=
   \delta^{\langle 1\rangle}\circ
    \delta^{\langle 2\rangle}.
\]
We may now introduce numerical approximations $\mathbf{U}^n$ representing the fully discretised system, the scheme is
\[
\frac{\mathbf{U}^{n+1}-\mathbf{U}^n}{\Delta t} = \mathcal{D}_d\frac{\delta\mathcal{H}_d}{\delta(\mathbf{U}^n,\mathbf{U}^{n+1})}.
\]

The conservative schemes based on polarisation are adapted in a straightforward manner, introducing a function
$H_d[\bw_1,\ldots,\bw_{\barp}]$ which is consistent and cyclic as in \eqref{eq:consist}, \eqref{eq:cyclic}, and
a skew-symmetric map $D_d$ depending on at most $k-1$ arguments. 
The scheme is then
	\begin{equation}\label{eq:discscheme}				
		\frac{\mathbf{U}^{n+\barp}-\mathbf{U}^{n}}{\barp\Delta t}=\barp D_d\frac{\delta H_d}
		{\delta(\mathbf{U}^{n},\dots,\mathbf{U}^{n+\barp})}.
	\end{equation}		
	This scheme conserves the function $H_d$ in the sense that
\[
    H_d[\mathbf{U}^{n+1},\ldots,\mathbf{U}^{n+\barp}]= H_d[\mathbf{U}^{0},\ldots,\mathbf{U}^{\barp-1}],\quad n\geq 0.
\]

\section{Examples}\label{se:num}
	To illustrate the procedures for constructing conservative schemes presented in  this paper we consider as an example the generalised Korteweg-de Vries (gKdV) equation
	\begin{equation*}
		u_t+u_{xxx}+(u^{p-1})_x=0
	\end{equation*}
	for an integer $p\geq3$, see for example~\cite{MR2457070}. 
	The case $p=3$ is the KdV equation \eqref{eq:KdV}, the case $p=4$ is known as the modified KdV equation, and
	$p=6$ is sometimes referred to as the mass critical generalised KdV equation. The gKdV can be written as \eqref{eq:hamileq} with
	\begin{equation*}
		\mathcal{H}[u]=\int_\Omega\left(\frac12u_x^2-\frac{1}{p}u^{p}\right)\,\mathrm{d}x,\quad
		\mathcal{D}=\frac{\partial}{\partial x}.
	\end{equation*}
	The AVF discrete variational derivative \eqref{eq:dgavf}
	gives rise to the fully implicit scheme \eqref{eq:dgm}
	\begin{equation}\label{eq:genkdvimp}
		\frac{U^{n+1}-U^n}{\Delta t}+\frac{U^{n+1}_{xxx}+U^{n}_{xxx}}2+\frac1{p}\left(\sum_{i=0}^{p-1}(U^{n+1})^{p-1-i}(U^{n})^i\right)_x=0.
	\end{equation}
	
	After applying the polarising procedure of Section \ref{se:pol} to $\mathcal{H}=\int_\Omega\mathcal{G}\,\mathrm{d}x$ we get $H=\int_\Omega G\,\mathrm{d}x$ which depends on  $k=\lceil p/2\rceil$ arguments
	\begin{equation*}
		G[w_1,\dots,w_k]=
		\frac1{2k}\sum_{i=1}^k(w_i)_x^2-
		\begin{cases}
			\frac1{pk}\left(\prod_{j=1}^kw_j^2\right)\left(\sum_{i=1}^k\frac{1}{w_i}\right), & p\text{ odd},\\
			\frac1{p}\prod_{j=1}^kw_j^2, & p\text{ even}.
		\end{cases}
	\end{equation*}
	After finding the 
	AVF discrete variational derivative from \eqref{eq:tja} we get the linearly implicit PAVF scheme \eqref{eq:scheme}
	\begin{multline}\label{eq:genkdvlinin}
		\frac{U^{n+k}-U^n}{k\Delta t}+\frac{U^{n+k}_{xxx}+U^{n}_{xxx}}2\\+
		\begin{cases}
			\frac1p\left[\left(\prod_{j=1}^{k-1}(U^{n+j})^2\right)\left(\sum_{i=1}^{k-1}\frac{U^{n+k}+U^{n}}{U^{n+i}}+1\right)\right]_x=0,& p\text{ odd},\\
			\frac12\left[\left(\prod_{j=1}^{k-1}(U^{n+j})^2\right)\left(U^{n+k}+U^{n}\right)\right]_x=0, & p\text{ even}.
		\end{cases}
	\end{multline}
	Notice that $U^{n+k}$ is indeed only appearing as linear terms in this scheme. 
	The schemes \eqref{eq:kdvimp} and \eqref{eq:kdvlinin} are found by setting $p=3$ ($k=2)$ in \eqref{eq:genkdvimp} and \eqref{eq:genkdvlinin}, respectively. Following the procedure of Section \ref{se:spatial} one can get a fully discretised scheme by replacing $U$ by $\mathbf{U}$ and the first and third derivative operators by $\delta^{\langle 1\rangle}$ and $\delta^{\langle 3\rangle}$ respectively.
	
	In the Figures \ref{fi:solitonerror} and \ref{fi:faseerror} we compare the conservative methods \eqref{eq:kdvimp} and \eqref{eq:kdvlinin} with the fully implicit midpoint method
	\begin{equation}\label{eq:fi}
		 \frac{U^{n+1}-U^{n}}{\Delta t}+\frac{U_{xxx}^{n+1}+U_{xxx}^{n}}2+\left(\left(\frac{U^{n+1}+U^{n}}2\right)^2\right)_x=0
	\end{equation}
	and a naive linearly implicit method
	\begin{equation}\label{eq:li}
		 \frac{U^{n+1}-U^{n}}{\Delta t}+\frac{U_{xxx}^{n+1}+U_{xxx}^{n}}2+\left(U^nU^{n+1}\right)_x=0.
	\end{equation}	
	We test the four methods on a traveling wave solution
	\begin{equation*}
		\Phi(x-ct)=\frac{3c}{2}\mathrm{sech}^2\left(\frac{3\sqrt{c}}{2}(x-ct)\right),\quad c>0
	\end{equation*}
	using the parameters $c=1$, $x=(-5,5)$, $\Delta x=\frac{10}{32}$ and $\Delta t=0.1$. 
	As an indication of the long time behaviour of the presented schemes we consider to which extent the methods are able to preserve the shape and propagation speed of a traveling wave solution. 
	  We define the two quantities 
	\begin{align}
		\varepsilon_{\mathrm{shape}}&=\min_{\tau} \lVert U^n-\Phi(\cdot-\tau)\rVert_2^2\label{eq:shape}
		\intertext{and}
		\varepsilon_{\mathrm{distance}}&=|\underset{\tau}{\mathrm{argmin}} \lVert U^n-\Phi(\cdot-\tau)\rVert_2^2-ct^n|.\label{eq:distance}
	\end{align}
	Thus $\varepsilon_{\mathrm{shape}}$ measures the shape error of the numerical solution, and $\varepsilon_{\mathrm{distance}}$ measures the error in the travelled distance of the numerical solution. 
	
	We see in Figure \ref{fi:solitonerror} the fully implicit schemes preserves the shape better than the linearly implicit ones, and that the conservative schemes perform better than the non-conservative ones. In Figure \ref{fi:faseerror} we see that the linearly implicit schemes have a more accurate phase speed than the fully implicit ones. 
	Figure \ref{fi:orderplot} shows the global error as a function of the time step. As expected the plot shows that the four methods are second order and that the linearly implicit schemes are inaccurate for large $\Delta t$.
	In conclusion we see that the linearly implicit conservative scheme performs comparably to the other methods while being more efficient (the latter is shown in Figure \ref{fi:secondpic}).
	\begin{figure}
		\centering
		\includegraphics[width=.8\textwidth]{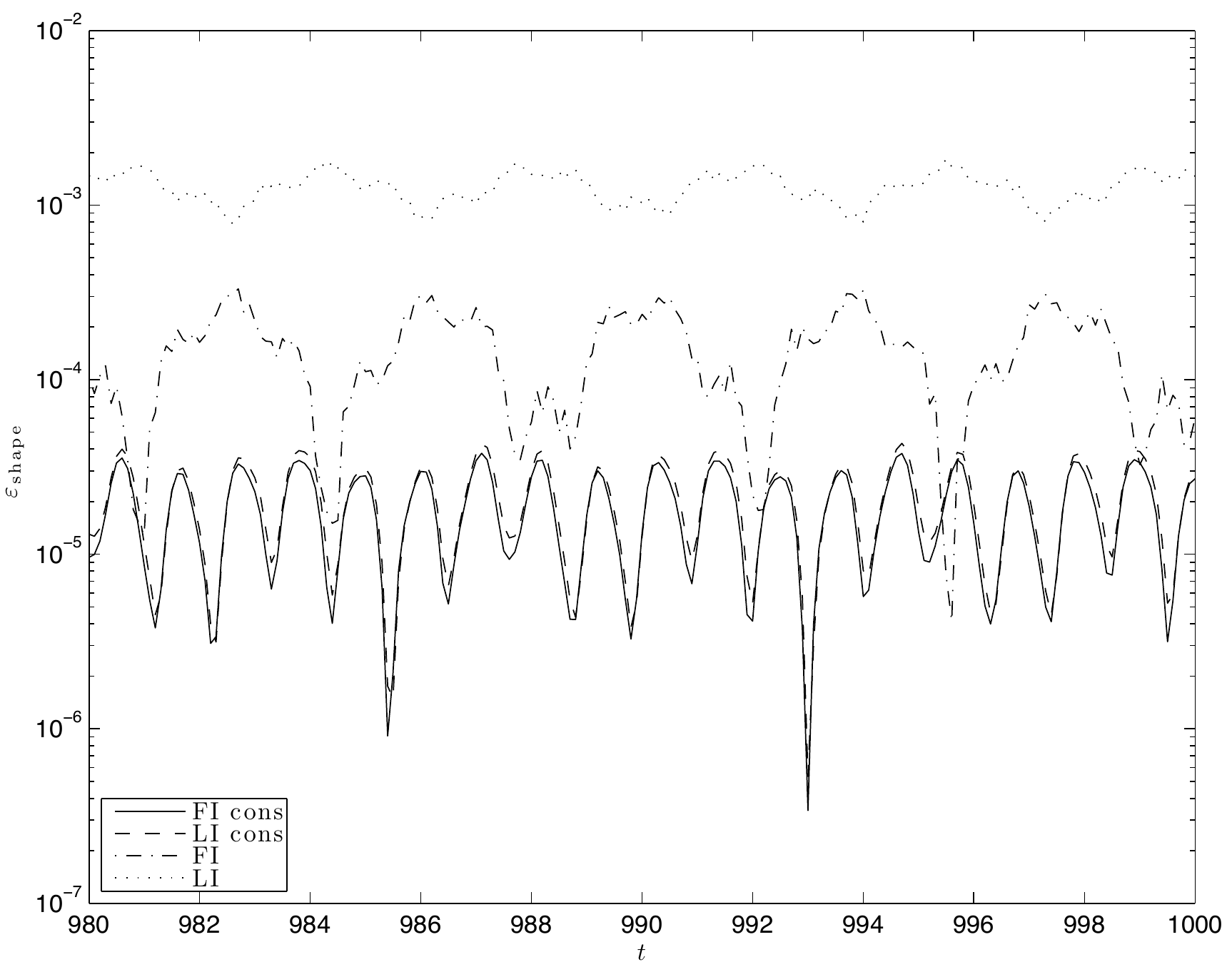}
		\caption{The shape error $\varepsilon_{\mathrm{shape}}$ \eqref{eq:shape} for the schemes \eqref{eq:kdvimp} (FI cons), \eqref{eq:kdvlinin} (LI cons), \eqref{eq:fi} (FI), and \eqref{eq:li} (LI). Only the largest times are shown, the plot is similar for smaller $t$. }
		\label{fi:solitonerror} 
	\end{figure}	
	\begin{figure}
		\centering
		\includegraphics[width=.8\textwidth]{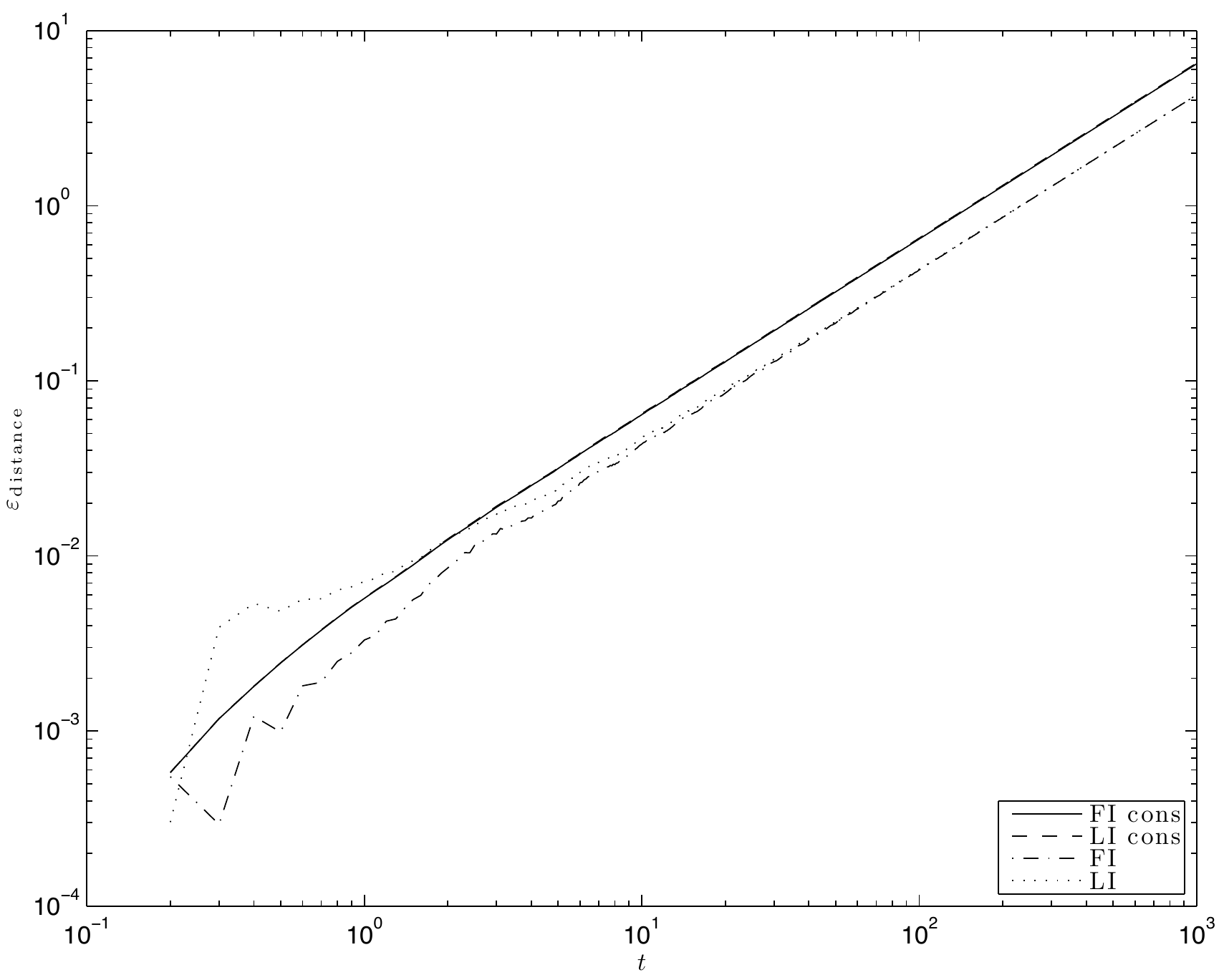}
		\caption{The distance error $\varepsilon_{\mathrm{distance}}$ \eqref{eq:distance} for the schemes \eqref{eq:kdvimp} (FI cons), \eqref{eq:kdvlinin} (LI cons), \eqref{eq:fi} (FI), and \eqref{eq:li} (LI). (FI cons) and (FI) are almost indistinguishable in this plot.  }
		\label{fi:faseerror} 
	\end{figure}	
	\begin{figure}
		\centering
		\includegraphics[width=.8\textwidth]{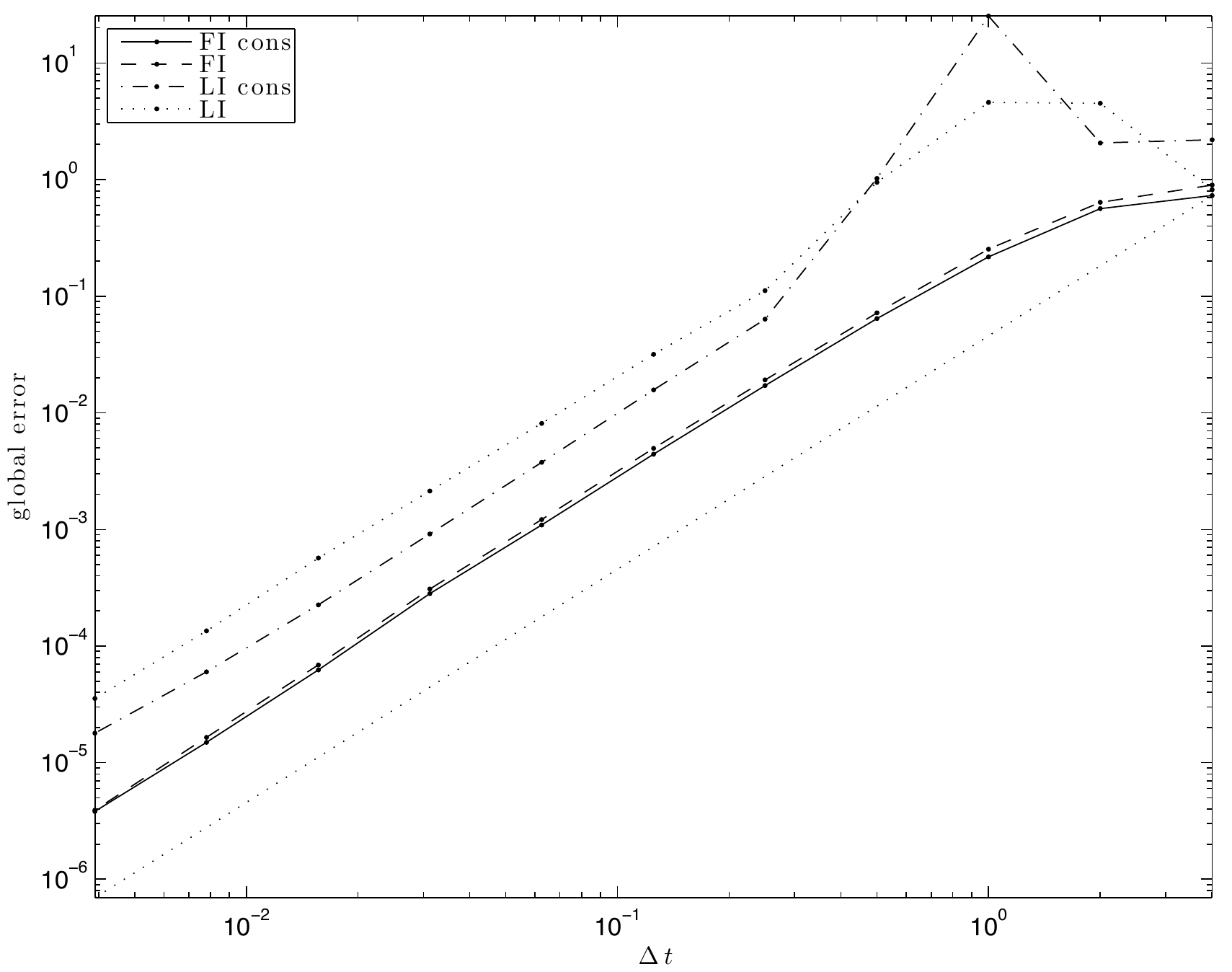}
		\caption{The global error at $t=8$ for the schemes \eqref{eq:kdvimp} (FI cons), \eqref{eq:kdvlinin} (LI cons), \eqref{eq:fi} (FI), and \eqref{eq:li} (LI). The dotted line is a second order reference line. }
		\label{fi:orderplot} 
	\end{figure}

 \bibliographystyle{plain}
 \bibliography{N8-2010}

\end{document}